\documentclass[journal,twoside,web]{ieeecolor}
\usepackage{generic}
\usepackage{cite}
\usepackage{amsmath,amssymb,amsfonts}
\usepackage{algorithmic}
\usepackage{graphicx}
\usepackage{algorithm,algorithmic}
\usepackage[caption=false,font=footnotesize]{subfig}
\usepackage{placeins}
\usepackage{hyperref}
\hypersetup{hidelinks}
\usepackage{textcomp}
\usepackage{tikz}
\usetikzlibrary{arrows.meta,positioning}
\usepackage{etoolbox}

\usepackage{pgfplots}
\pgfplotsset{compat=1.18}
\usepackage{tikz}
\usetikzlibrary{arrows.meta}

\hypersetup{hypertexnames=false}
\allowdisplaybreaks[3]
\allowdisplaybreaks[4]
\newtheorem{theorem}{Theorem}
\newtheorem{lemma}{Lemma}
\newtheorem{corollary}{Corollary}
\newtheorem{proposition}{Proposition}

\newtheorem{assumption}{Assumption}
\newtheorem{remark}{Remark}

\newcommand{\plotinclude}[2][]{%
  \IfFileExists{#2}{%
    \includegraphics[#1]{#2}%
  }{%
    \fbox{\parbox[c][1.05in][c]{0.29\textwidth}{\centering
    Missing figure\\[-0.2ex]\footnotesize\texttt{\detokenize{#2}}}}%
  }%
}

\def\BibTeX{{\rm B\kern-.05em{\sc i\kern-.025em b}\kern-.08em
    T\kern-.1667em\lower.7ex\hbox{E}\kern-.125emX}}
\begin{document}
\title{A Continuous-Time Analysis of Smoothed Matrix-Polar Spectral Gradient Flows for Muon-Type Optimization}
\author{Jinlin~Liu, Song~Chen, Jiaxu~Liu, and Chao~Xu,~\IEEEmembership{Senior Member, IEEE}%
\thanks{This work was supported by the Key Research and Development Project of China National Tobacco Corporation under Grant 110202402018 and by the National Natural Science Foundation of China under Grants 62573382, 12171431, 62373323, and 72342025. \emph{(Corresponding authors: Song Chen and Chao Xu.)}}
\thanks{Jinlin Liu is with the School of Mathematical Sciences, Zhejiang University, Hangzhou, Zhejiang 310027, China (e-mail: 3230104798@zju.edu.cn).}%
\thanks{Song Chen is with the Department of Mathematics, National University of Singapore, Singapore (e-mail: song.chen@nus.edu.sg).}%
\thanks{Jiaxu Liu is with the School of Science, Huzhou Normal University, Huzhou 313000, China (e-mail: jiaxuliu@zju.edu.cn).}%
\thanks{Chao Xu is with the State Key Laboratory of Industrial Control Technology, the Institute of Cyber-Systems and Control, College of Control Science and Engineering, Zhejiang University, Hangzhou, Zhejiang 310027, China, and also with Huzhou Institute of Zhejiang University, Huzhou, Zhejiang 313000, China (e-mail: cxu@zju.edu.cn).}}

% If you want to put a publisher's ID mark on the page you can do it like
% this:
%\IEEEpubid{0000--0000/00\$00.00~\copyright~2015 IEEE}
% Remember, if you use this you must call \IEEEpubidadjcol in the second
% column for the text to clear the IEEEpubid mark.

% use for special paper notices
%\IEEEspecialpapernotice{(Invited Paper)}

% make the title area
\maketitle

% As a general rule, do not put math, special symbols or citations
% in the abstract or keywords.

\begin{abstract}
    This paper studies smoothed matrix-polar spectral gradient flows for unconstrained matrix-valued optimization.
    The canonical polar-factor map loses smoothness at rank-deficient matrices and becomes ill-conditioned as singular values approach zero, creating analytical difficulties.
    We therefore introduce a spectral feedback law generated by a smooth spectral potential and establish the regularity, monotonicity, boundedness, and dissipation properties of the feedback.
    Based on this feedback law, we propose a smoothed spectral gradient flow and prove well-posedness and global convergence of the flow.
    We derive convergence-rate results for the spectral gradient flow in nonconvex, convex, and Polyak--Lojasiewicz (PL) settings and analyze the Lyapunov structure and convergence of a momentum-augmented system under the same spectral feedback law. Furthermore, we provide a local descent-rate comparison between the smoothed spectral-gradient direction and the standard Frobenius-gradient direction using a general Hessian-based quadratic model. This analysis yields a verifiable normalized descent-rate advantage condition, showing that the local benefit of the spectral direction depends on both first-order alignment with the gradient matrix and the directional curvature induced by the Hessian.
\end{abstract}

% Note that keywords are not normally used for peerreview papers.

% For peer review papers, you can put extra information on the cover
% page as needed:
% \ifCLASSOPTIONpeerreview
% \begin{center} \bfseries EDICS Category: 3-BBND \end{center}
% \fi
%
% For peerreview papers, this IEEEtran command inserts a page break and
% creates the second title. It will be ignored for other modes.
\IEEEpeerreviewmaketitle

\begin{IEEEkeywords}
Smoothed spectral gradient flow,
Continuous-time optimization,
Matrix optimization,
Convergence analysis,
Neural network optimization\end{IEEEkeywords}

\section{Introduction}

\IEEEPARstart{N}{eural-network} parameters are naturally organized
as matrices and higher-order tensors, as illustrated in
Fig.~\ref{fig:weight_representation}. A common approach is to
vectorize these parameters and apply optimization methods
developed for Euclidean vector spaces. However, vectorization may obscure the multilinear and
spectral structures inherent in the original representation.
This observation motivates the direct study of optimization over
structured matrix- and tensor-valued variables. From a
continuous-time perspective, it also suggests extending
vector-valued optimization dynamics and feedback laws to
matrix- and tensor-valued systems whose update mechanisms respect
the underlying structure. This structural viewpoint has recently
gained prominence in large-scale deep learning optimization
\cite{bernstein2025modular,liu2025muon}. A representative example
is the Muon optimizer. Specifically, to solve the unconstrained matrix-structured optimization problem $\min_{W\in\mathbb{R}^{m\times n}} f(W)$, a typical Muon-type update can be written as

\begin{figure*}[!t]
    \centering
    \includegraphics[width=0.97\textwidth]{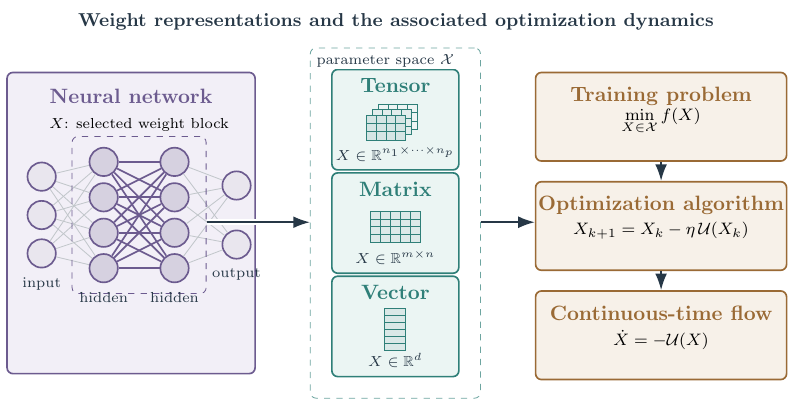}
    \caption{A selected neural-network weight block can be represented in tensor,
matrix, or vector form, leading to a training problem
$\min_{X\in\mathcal X} f(X)$ on the chosen parameter space
$\mathcal X$. From this problem, one may design a discrete
optimization algorithm
$X_{k+1}=X_k-\eta\,\mathcal U(X_k)$,
where $f$ is the loss function and $\mathcal U$ is an update operator related to $\nabla f(X)$.} 
    \label{fig:weight_representation}
\end{figure*}

\begin{align*}
    M_k&=\beta M_{k-1}+\nabla f(W_k),\\
    W_{k+1}&=W_k-\eta\,\operatorname{Polar}(M_k),
\end{align*}
where the matrix polar factor of a full-column-rank matrix \(M\) is
\(\operatorname{Polar}(M)=M(M^\top M)^{-1/2}\). Here, \(M_k\) is a momentum state, \(\eta>0\) is the stepsize, and \(\beta\in[0,1)\) is the momentum parameter. If $\beta=0$, the above scheme reduces to $W_{k+1}=W_k-\eta\,\operatorname{Polar}(\nabla f(W_k))$. Unlike standard gradient descent or common coordinatewise adaptive gradient methods, Muon does not merely adjust the stepsize of each coordinate; instead, it applies a matrix-polar-type normalization to a matrix-valued gradient or momentum state. In practical implementations, the matrix-polar update direction $\operatorname{Polar}(\cdot)$ is often computed approximately, for example by Newton--Schulz iterations or related matrix-iteration schemes \cite{bjorck1971iterative,kim2026newtonschulz}. The distinctive feature of this update is that it preserves the singular-vector structure of the gradient or momentum matrix while normalizing the corresponding singular values. Therefore, a Muon update direction can be interpreted as a spectral gradient direction.

Recent analyses of Muon-type iterations have developed discrete-time
convergence bounds and clarified the role of matrix-polar normalization
under different assumptions \cite{li2025note,shen2025convergence}.

Furthermore, a control-system perspective distinguishes Muon-type methods
from the standard gradient-flow viewpoint. For standard gradient descent,
the classical continuous-time limit is the gradient flow
$\dot W=-\nabla f(W)$
\cite{polyak1964methods,su2016differential,wibisono2016variational,wilson2021lyapunov},
which can also be represented as the feedback control system
\(\dot W=u\), \(u=-\nabla f(W)\), where the feedback term $u$ is given
directly by the Frobenius gradient. This interpretation connects optimization
dynamics with dissipativity and feedback-system analysis
\cite{willems1972dissipativeI,andrieu2010uniting}.
Related continuous-time gradient and consensus dynamics, as well as
decentralized stochastic optimization schemes, have been analyzed using
Lyapunov, invariance, and robustness tools
\cite{gharesifard2014distributed,lu2012zgs,wang2023quantization}.
Classical extremum-seeking feedback provides another control-theoretic route
to real-time optimization \cite{krstic2000extremum,guay2003adaptive}.
Recent feedback-oriented designs include projected zeroth-order dynamics,
backstepping-based accelerated flows, and policy-optimization methods for
robust control \cite{chen2025zerothorder,chen2026backstepping,
guo2023derivativefree}.
Control-inspired optimization frameworks, including PIDAO models
\cite{chen2024pidao}, motivate a natural question: can one construct a
continuous-time spectral gradient-flow model that reflects the Muon feedback
mechanism? The feedback term in Muon-type methods has an explicit spectral
structure, \(u=-\operatorname{Polar}(G)\). This transformation preserves the
singular vectors of the gradient and modifies the distribution of the gradient singular values
\cite{bhatia1997matrix,higham2008functions}. This structure therefore motivates the Muon-type continuous-time flow
\(\dot W=-\operatorname{Polar}(G)\).

Finite-time and fixed-time optimization flows further demonstrate how
the choice of continuous-time feedback law changes the convergence
mechanism \cite{garg2021fixedtime,
lin2017distributed}. The continuous-time optimization literature has also
developed Lyapunov and invariance-based analyses for distributed,
constrained, time-varying, and stochastic optimization problems
\cite{shi2013optimalconsensus,rahili2017timevarying,zeng2017constrained,
charalambous2014extremum}.

However, directly using the exact matrix-polar flow leads to technical difficulties. The mapping
\(G\mapsto G(G^\top G)^{-1/2}\) is nonsmooth near small singular values and rank changes. As a result, exact matrix-polar feedback laws are not immediately suitable for classical ODE-based well-posedness and Lyapunov analysis \cite{higham2008functions,lewis1996convex,lewis2005singular}. To avoid this nonsmoothness while preserving the spectral normalization mechanism of Muon-type methods, we introduce the smoothed matrix-polar feedback law \[h_\epsilon(G)=G(G^\top G+\epsilon I)^{-1/2}, \qquad \epsilon>0.\]
When the singular values of \(G\) are much larger than \(\sqrt{\epsilon}\), the map \(h_\epsilon(G)\) closely approximates the exact matrix-polar direction; when they approach zero, the map smoothly transitions to linear feedback, ensuring the desired regularity for rigorous dynamical analysis.

Based on this feedback law, we first study the direct smoothed spectral gradient flow
\(\dot W=-h_\epsilon(\nabla f(W))\).
Within a feedback-modeling framework, this model can be understood as a Muon-type continuous-time system obtained by replacing ordinary gradient feedback with smoothed matrix-polar spectral feedback. Compared with the standard gradient flow, this system descends along a spectrally normalized gradient direction. Compared with the discrete Muon algorithm, the model removes the finite stepsize, Newton--Schulz approximation, and nonsmooth matrix-polar singularities, thereby enabling continuous-time Lyapunov analysis and local curvature-based comparison.

We then analyze the basic dynamical properties of the flow, including well-posedness,
energy dissipation, boundedness, and convergence to the stationary set.
These properties clarify whether the smoothed spectral feedback law
defines a stable optimization flow in the classical ODE sense.

In addition to the direct spectral gradient flow, we also consider a momentum-augmented system
\[
\dot W=-h_\epsilon(M),
\qquad
\dot M=a\nabla f(W)-bM,
\qquad a>0,\ b>0.
\]
This system mirrors the momentum-state structure of Muon-type methods. We show that the system admits a natural Lyapunov energy function and analyze the long-time convergence behavior of the system. The proposed continuous-time system is not claimed to be an
exact finite-stepsize equivalent of the discrete Muon algorithm.
Rather, it is studied as a smoothed Muon-type spectral feedback
flow that reveals the continuous-time structure and dissipation
mechanism of matrix-polar update directions. Nevertheless, on
every fixed finite time interval over which the relevant gradient
or momentum singular values remain uniformly bounded away from
zero, the smoothed flow converges uniformly to the corresponding
ideal matrix-polar flow as $\epsilon\to 0$.

Beyond these global dynamical properties, we also use the continuous-time
viewpoint to explain a local mechanism behind spectral normalization.
This question is related to recent structural interpretations of gradient
orthogonalization, norm-constrained linear minimization oracles, and
layerwise spectral conditions \cite{kovalev2025orthogonalization,davis2025spectral}.
At a fixed point \(W\), a natural question is when the smoothed spectral
direction can be more efficient than the standard Frobenius-gradient
direction under the local second-order loss structure. To address this pointwise
question, we compare the two directions under a general Hessian-based
quadratic model subject to positive directional-curvature conditions. This
yields a verifiable local advantage criterion, which characterizes the
spectral and curvature conditions under which the smoothed matrix-polar update
direction can provide a larger local descent rate.

The main contributions of this paper are summarized as follows. 
\begin{itemize}
    \item First, we introduce a smoothed matrix-polar spectral feedback law and establish the key properties of the feedback law, including regularity, monotonicity, boundedness, and sector-type dissipation.
    \item Second, we prove global well-posedness and convergence of the
    direct spectral flow and derive nonconvex, convex, and
    PL convergence-rate bounds.
    \item Third, we establish a compatible Lyapunov structure and
    convergence-rate bounds for a momentum-augmented flow, and
    prove a finite-horizon approximation result for the ideal matrix-polar
    flows under a uniform lower singular-value bound.
    \item Finally, we provide a local descent-rate comparison between the smoothed spectral update direction and the Frobenius-gradient update direction, yielding a verifiable local advantage criterion.
\end{itemize}

The rest of the paper is organized as follows. Section II introduces the
smoothed spectral feedback law and the basic properties of the law. Section III
studies the direct smoothed spectral gradient flow and the
momentum-augmented system. Section IV establishes convergence-rate results
for the direct and the momentum-augmented systems. Section V develops a local
descent-rate comparison between the smoothed spectral update direction and the
Frobenius-gradient update direction, providing a pointwise explanation of when
spectral normalization improves local descent efficiency.
Section VI presents numerical experiments.

\textit{Notation.}
For a matrix \(A\in\mathbb R^{m\times n}\), \(A^\top\) denotes the transpose of \(A\), and
\(\operatorname{Tr}(A)\) denotes the trace of \(A\) when \(A\) is square. The symbol \(I\) denotes an identity matrix of compatible dimension. The
Frobenius inner product and Frobenius norm are defined by \(\langle A,B\rangle=\operatorname{Tr}(A^\top B)\) and \(\|A\|_F=\sqrt{\langle A,A\rangle}=(\sum_{i,j} A_{ij}^2)^{1/2}\). We use a singular value decomposition \(A=U\operatorname{diag}(\sigma_1,\ldots,\sigma_d)V^\top\), where \(d=\min\{m,n\}\),
\(\sigma_i=\sigma_i(A)\ge 0\) are the singular values, and
\(
U\in\mathbb R^{m\times d},
V\in\mathbb R^{n\times d}
\). 
The notation
\(\operatorname{diag}(\sigma_1,\ldots,\sigma_d)\) denotes the diagonal
matrix with diagonal entries \(\sigma_1,\ldots,\sigma_d\). The nuclear norm
of \(A\), defined as the sum of the singular values, is
\(\|A\|_*=\sum_{i=1}^d \sigma_i(A)\). The Frobenius norm also satisfies
\(\|A\|_F^2=\sum_{i=1}^d \sigma_i(A)^2\).

For a nonempty set \(\mathcal A\subset \mathbb R^{m\times n}\), the
distance from \(W\in \mathbb{R}^{m\times n}\) to \(\mathcal A\) is measured using the Frobenius norm and is
defined by \(\operatorname{dist}(W,\mathcal A)=\inf_{Y\in\mathcal A}\|W-Y\|_F\).
For a trajectory \(W(t)\) initialized at \(W_0\), the omega-limit set of the trajectory is
defined as
\[
\begin{aligned}
\omega(W_0)=\bigl\{\bar W\in\mathbb R^{m\times n}:{}&
\exists\,t_k\to\infty\ \text{such that}\\
&W(t_k)\to\bar W\bigr\}.
\end{aligned}
\]
For an extended trajectory \((W(t),M(t))\), the notation
\(\omega(W_0,M_0)\) is defined analogously in the product space.

When \(f\) is \(C^2\), \(\mathcal{H}_W[D]\) denotes the Hessian action of \(f\)
at \(W\) along the direction \(D\), namely
\[
\mathcal{H}_W[D]
=
\left.
\frac{d}{ds}\nabla f(W+sD)
\right|_{s=0}.
\]
It describes the first-order variation of the gradient when \(W\) is
perturbed along \(D\). In particular,
\[
\langle D,\mathcal{H}_W[D]\rangle
=
\left.
\frac{d^2}{ds^2}f(W+sD)
\right|_{s=0}
\]
represents the local curvature of the objective along \(D\). As shown in Figure \ref{fig:hessian_action_surface}, a positive
value indicates that the objective is locally convex along this direction,
while a larger value corresponds to stronger curvature.

\begin{figure}[!t]
\centering
\begin{tikzpicture}
\pgfmathsetmacro{\wx}{0.55}
\pgfmathsetmacro{\wy}{-0.45}
\pgfmathsetmacro{\dx}{0.85}
\pgfmathsetmacro{\dy}{0.48}
\pgfmathsetmacro{\wz}{0.6*(\wx)^2 + 0.2*(\wx)*(\wy) + 0.9*(\wy)^2}

\begin{axis}[
    width=\columnwidth,
    height=0.72\columnwidth,
    view={130}{28},
    axis lines=left,
    xlabel={$x_1$},
    ylabel={$x_2$},
    zlabel={$f$},
    xlabel style={font=\scriptsize},
    ylabel style={font=\scriptsize},
    zlabel style={font=\scriptsize},
    domain=-1.7:1.7,
    y domain=-1.7:1.7,
    samples=18,
    samples y=18,
    colormap/viridis,
    z buffer=sort,
    grid=none,
    clip=false,
    enlargelimits=0.08,
    xtick=\empty,
    ytick=\empty,
    ztick=\empty,
    legend style={
        at={(0.02,0.98)},
        anchor=north west,
        font=\scriptsize,
        draw=none,
        fill=white,
        fill opacity=0.86,
        text opacity=1
    }
]

\addplot3[surf, opacity=0.58, shader=interp]
{0.6*x^2 + 0.2*x*y + 0.9*y^2};
\addlegendentry{local quadratic model}

\addplot3[very thick, red!80!black, samples=70, domain=-1.05:1.05]
(
    {\wx + x*\dx},
    {\wy + x*\dy},
    {0.6*(\wx + x*\dx)^2 + 0.2*(\wx + x*\dx)*(\wy + x*\dy) + 0.9*(\wy + x*\dy)^2}
);
\addlegendentry{$s\mapsto f(W+sD)$}

\addplot3[only marks, mark=*, mark size=2.1pt, black]
coordinates {(\wx,\wy,\wz)};

\node[font=\scriptsize, anchor=south west] at (axis cs:\wx,\wy,\wz) {$W$};

\addplot3[only marks, mark=*, mark size=1.4pt, gray]
coordinates {(\wx,\wy,0)};

\addplot3[densely dashed, gray]
coordinates {(\wx,\wy,0) (\wx,\wy,\wz)};

\addplot3[-{Latex[length=2mm]}, thick, blue!75!black]
coordinates {(\wx,\wy,0) ({\wx+0.72*\dx},{\wy+0.72*\dy},0)};

\node[font=\scriptsize, anchor=west, blue!75!black]
    at (axis cs:{\wx+0.75*\dx},{\wy+0.75*\dy},0) {$D$};

\end{axis}
\end{tikzpicture}
\caption{Illustration of the Hessian action and directional curvature.
At the point $W$, the direction \(D\) determines the slice
\(s\mapsto f(W+sD)\) shown by the red curve. The quantity
\(\langle D,\mathcal H_W[D]\rangle
=
\left.\frac{d^2}{ds^2}f(W+sD)\right|_{s=0}\)
is the second derivative of this slice at \(s=0\), and hence measures the
local curvature of the objective along the direction \(D\).}
\label{fig:hessian_action_surface}
\end{figure}

	\section{Smoothed Spectral Feedback Law}
\label{sec:spectral_feedback}

For $\epsilon>0$, define the smoothed matrix-polar feedback law
\begin{align}
	h_\epsilon(Z)
	=
	Z(Z^\top Z+\epsilon I)^{-1/2}.
	\label{eq:h_epsilon_def}
\end{align}
If $Z=U\operatorname{diag}(\sigma_i)V^\top$ is a singular value
decomposition of $Z$, then
\begin{align}
	h_\epsilon(Z)
	=
	U\operatorname{diag}
	\left(
	\frac{\sigma_i}{\sqrt{\sigma_i^2+\epsilon}}
	\right)V^\top .
	\label{eq:h_epsilon_svd}
\end{align}
Thus, $h_\epsilon$ preserves the singular vectors of $Z$ and smoothly
normalizes the singular values. It approaches the standard matrix-polar direction
on singular modes satisfying $\sigma_i\gg\sqrt{\epsilon}$ and is
linear near zero.

The feedback law is generated by the spectral energy
\begin{align}
	\Phi_\epsilon(Z)
	=
	\operatorname{Tr}
	\left[
	(Z^\top Z+\epsilon I)^{1/2}
	-
	\sqrt{\epsilon}I
	\right],
	\label{eq:Phi_epsilon_def}
\end{align}
whose gradient satisfies
\begin{align}
	\nabla\Phi_\epsilon(Z)=h_\epsilon(Z).
	\label{eq:h_epsilon_gradient}
\end{align}
Let $d=\min\{m,n\}$. The following properties are used throughout the
paper.

\begin{lemma}
	\label{lem:heps_properties}
For any $\epsilon>0$ and any $M,N\in \mathbb R^{m\times n}$, the map
$h_\epsilon:\mathbb R^{m\times n}\to\mathbb R^{m\times n}$ satisfies the following properties:

\noindent\textup{(i)} It is continuously differentiable and globally
	Lipschitz, with
	\begin{align}
		\|h_\epsilon(M)-h_\epsilon(N)\|_F
		\le
		\frac1{\sqrt{\epsilon}}\|M-N\|_F .
		\label{eq:heps_lipschitz}
	\end{align}

	\noindent\textup{(ii)} It is generated by the spectral potential:
	\begin{align}
		\nabla_M\Phi_\epsilon(M)=h_\epsilon(M).
		\label{eq:Phi_gradient}
	\end{align}

	\noindent\textup{(iii)} It is monotone:
	\begin{align}
		\left\langle
		h_\epsilon(M)-h_\epsilon(N),M-N
		\right\rangle
		\ge0.
		\label{eq:heps_monotone}
	\end{align}

	\noindent\textup{(iv)} It is uniformly bounded:
	\begin{align}
		\|h_\epsilon(M)\|_F\le\sqrt d.
		\label{eq:heps_bound}
	\end{align}

	\noindent\textup{(v)} If $\|M\|_F\le K$, then
	\begin{align}
		\frac{\|M\|_F^2}{\sqrt{K^2+\epsilon}}
		\le
		\langle h_\epsilon(M),M\rangle
		\le
		\frac{\|M\|_F^2}{\sqrt{\epsilon}}.
		\label{eq:heps_dissipation_bounds}
	\end{align}
\end{lemma}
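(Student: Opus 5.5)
The plan is to treat everything through the spectral-function calculus of Lewis. Write $\Phi_\epsilon(Z)=\sum_{i=1}^d \phi(\sigma_i(Z))$ with the scalar function $\phi(s)=\sqrt{s^2+\epsilon}-\sqrt{\epsilon}$. This scalar function is even, smooth and convex on $\mathbb R$. Its derivative $\phi'(s)=s/\sqrt{s^2+\epsilon}$ takes values in $(-1,1)$, and its second derivative $\phi''(s)=\epsilon/(s^2+\epsilon)^{3/2}$ lies in $(0,1/\sqrt\epsilon]$.

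\textbf{Step 1: item (ii) and convexity.} Since the sum $\sum_i\phi(\sigma_i)$ is a symmetric, absolutely symmetric convex $C^\infty$ function of the singular values, Lewis's theorem (or, directly, the fact that $Z\mapsto \operatorname{Tr}g(Z^\top Z)$ is smooth for smooth $g$ on $[0,\infty)$) gives two conclusions. First, $\Phi_\epsilon$ is convex and $C^2$. Second, its gradient is $U\operatorname{diag}(\phi'(\sigma_i))V^\top$, which is \eqref{eq:h_epsilon_svd}. This proves (ii). A more elementary route is to differentiate $\operatorname{Tr}(Z^\top Z+\epsilon I)^{1/2}$ using $d\operatorname{Tr}g(A)=\operatorname{Tr}(g'(A)\,dA)$ with $A=Z^\top Z$. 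This gives $\operatorname{Tr}\bigl(\tfrac12(Z^\top Z+\epsilon I)^{-1/2}(dZ^\top Z+Z^\top dZ)\bigr)=\langle Z(Z^\top Z+\epsilon I)^{-1/2},dZ\rangle$, which also settles the $C^1$ claim in (i), since $h_\epsilon$ is a composition of smooth maps (the inverse square root is smooth on positive definite matrices).

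\textbf{Step 2: item (iii).} Monotonicity follows immediately because $h_\epsilon=\nabla\Phi_\epsilon$ is the gradient of a convex function. No further work is needed beyond Step 1.

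\textbf{Step 3: item (i), the Lipschitz bound.} This is the main obstacle, since the spectral map is not entrywise and Lipschitz constants of spectral functions require care. I would use the Hessian route. The Hessian of a spectral function $\sum\phi(\sigma_i)$ acting on $H$ is built from two kinds of quantities:
\begin{itemize}
\item the diagonal curvatures $\phi''(\sigma_i)$;
\item the divided differences $\frac{\phi'(\sigma_i)-\phi'(\sigma_j)}{\sigma_i-\sigma_j}$ and $\frac{\phi'(\sigma_i)+\phi'(\sigma_j)}{\sigma_i+\sigma_j}$, together with $\phi'(\sigma_i)/\sigma_i$ for the rectangular part.
\end{itemize}
All of these lie in $[0,\sup\phi'']=[0,1/\sqrt\epsilon]$. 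The first divided difference is controlled by the mean value theorem. The others are controlled because $\phi'$ is odd and concave on $[0,\infty)$, so $\phi'(s)/s\le\phi''(0)=1/\sqrt\epsilon$. Hence $\|D h_\epsilon(Z)\|_{F\to F}\le 1/\sqrt\epsilon$ for every $Z$, and integrating along the segment from $N$ to $M$ yields \eqref{eq:heps_lipschitz}.

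If the explicit Hessian formula is considered too heavy, I would try an alternative argument. A convex $C^1$ function is $L$-smooth as soon as it is the sum of an $L$-smooth part and so on. Concretely, $\Phi_\epsilon(Z)=\sup_{\|Y\|_2\le1}\{\dots\}$, a Moreau-type envelope of the nuclear norm. Then the $1/\sqrt\epsilon$ bound follows from the standard fact that a gradient $\nabla\Phi$ is $L$-Lipschitz iff $\Phi^*$ is $1/L$-strongly convex. Here $\Phi^*$ is the spectral function of $\phi^*$, and $(\phi^*)''\ge\sqrt\epsilon$ on $(-1,1)$. Strong convexity of a spectral function transfers from its scalar generator, again by Lewis's theory.

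\textbf{Step 4: items (iv) and (v).} These are direct SVD computations. For (iv), $\|h_\epsilon(M)\|_F^2=\sum_i\sigma_i^2/(\sigma_i^2+\epsilon)\le d$. For (v), $\langle h_\epsilon(M),M\rangle=\sum_i\sigma_i^2/\sqrt{\sigma_i^2+\epsilon}$. The upper bound follows by dropping $\sigma_i^2$ from the denominator. For the lower bound, each $\sigma_i\le\|M\|_F\le K$, so each denominator is at most $\sqrt{K^2+\epsilon}$; summing and using $\sum\sigma_i^2=\|M\|_F^2$ gives \eqref{eq:heps_dissipation_bounds}.
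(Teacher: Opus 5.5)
Your treatment of (ii)--(v) matches the paper's: the trace differential of $\operatorname{Tr}(Z^\top Z+\epsilon I)^{1/2}$ for the gradient, convexity of $\sum_i\phi(\sigma_i)$ for monotonicity, and direct SVD sums for (iv)--(v). Your Lipschitz argument in (i) is correct but takes a different route.

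The paper never differentiates $h_\epsilon$ for this. It passes to the symmetric dilation $\mathcal T(M)=\begin{pmatrix}0&M\\ M^\top&0\end{pmatrix}$ and notes that the odd function $g_\epsilon(s)=s/\sqrt{s^2+\epsilon}$ gives $g_\epsilon(\mathcal T(M))=\begin{pmatrix}0&h_\epsilon(M)\\ h_\epsilon(M)^\top&0\end{pmatrix}$. It then applies the standard fact that an $L$-Lipschitz scalar function induces an $L$-Lipschitz map on symmetric matrices in the Frobenius norm, and the factors $\sqrt2$ cancel. You instead bound $Dh_\epsilon(Z)$ pointwise through the singular-value Hessian formula and integrate along segments.

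The two arguments share a core: both reduce to bounding divided differences of $g_\epsilon=\phi'$ by $\sup|g_\epsilon'|=1/\sqrt\epsilon$. At the eigenvalues $\pm\sigma_i$ (and $0$ when $m\neq n$) of the dilation, these divided differences are exactly your three families $\frac{\phi'(\sigma_i)-\phi'(\sigma_j)}{\sigma_i-\sigma_j}$, $\frac{\phi'(\sigma_i)+\phi'(\sigma_j)}{\sigma_i+\sigma_j}$ and $\phi'(\sigma_i)/\sigma_i$.

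\textbf{What the dilation buys.} It is economical: the rectangular case becomes a one-line eigenvalue estimate that needs no derivative and no special care at repeated or zero singular values.

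\textbf{What your Hessian route costs and buys.} It needs the rectangular second-order (Lewis--Sendov/Daleckii--Krein type) formula. It also needs a remark that the bound extends from matrices with distinct nonzero singular values to all $Z$, by continuity of $Dh_\epsilon$ (available because $h_\epsilon$ is $C^\infty$). In exchange, it shows that $Dh_\epsilon(Z)$ is self-adjoint with spectrum in $(0,1/\sqrt\epsilon]$, which gives (i) and (iii) at once.

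\textbf{Your duality alternative.} It is also sound and avoids second-order calculus entirely. Indeed $\Phi_\epsilon^*=\phi^*\circ\sigma$ with $\phi^*(y)=\sqrt\epsilon\,(1-\sqrt{1-y^2})$ on $[-1,1]$, and this is $\sqrt\epsilon$-strongly convex. Two corrections there. First, $\Phi_\epsilon$ is a Nesterov-type smoothing of the nuclear norm, not its Moreau envelope; the Moreau envelope is the spectral Huber function. Second, the sentence about a ``sum of an $L$-smooth part'' is garbled and should be dropped.
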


The proof is provided in Appendix~\ref{app:heps_properties}. These structural properties make \(h_\epsilon\) suitable as a feedback law in continuous-time optimization dynamics: the Lipschitz estimate gives well-posedness, while the monotonicity and sector dissipation estimates provide the basic Lyapunov mechanism used below.

\section{Smoothed Spectral Gradient Flow}
\label{sec:direct_flow}
\label{sec:wellposed_momentum}
We now turn to the induced smoothed spectral gradient dynamics.
\subsection{Direct Spectral Gradient Flow}
We derive the direct smoothed spectral gradient flow using the feedback map \eqref{eq:h_epsilon_def}:
\begin{align}
	\dot W(t)
	=
	-h_\epsilon(\nabla f(W(t))),
	\qquad
	W(0)=W_0.
	\label{eq:direct_spectral_gradient_flow}
\end{align}
Throughout the analysis, we impose the following assumptions.

\begin{assumption}
	\label{ass:local_gradient_lipschitz}
The function $f:\mathbb R^{m\times n}\to\mathbb R$ is continuously
differentiable, and $\nabla f$ is Lipschitz continuous on every bounded
	subset of $\mathbb R^{m\times n}$.
\end{assumption}

\begin{assumption}
	\label{ass:lower_coercive}
The function $f$ is bounded below and coercive. In particular, there exists
$f_{\inf}\in\mathbb R$ such that $f(W)\ge f_{\inf}$ for every $W$.
\end{assumption}

\begin{proposition}[Global well-posedness of the direct flow]
	\label{prop:direct_flow_wellposedness}
Under Assumption~\ref{ass:local_gradient_lipschitz}, for every $W_0$,
system \eqref{eq:direct_spectral_gradient_flow} admits a unique global
	solution. Moreover,
	\[
	\|\dot W(t)\|_F\le\sqrt d,
	\qquad
	\|W(t)-W_0\|_F\le\sqrt d\,t.
	\]
\end{proposition}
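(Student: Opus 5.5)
The argument combines local existence and uniqueness with an a priori bound that rules out finite-time blow-up. First set $F(W)=-h_\epsilon(\nabla f(W))$ and check that $F$ is continuous and locally Lipschitz on $\mathbb R^{m\times n}$. Fix a bounded set $B$ and let $L_B$ be the Lipschitz constant of $\nabla f$ on $B$, which exists by Assumption~\ref{ass:local_gradient_lipschitz}. Lemma~\ref{lem:heps_properties}(i) says $h_\epsilon$ is globally Lipschitz with constant $1/\sqrt{\epsilon}$. Composing the two gives, for $W,W'\in B$,
\[
\|F(W)-F(W')\|_F\le \frac{L_B}{\sqrt\epsilon}\|W-W'\|_F .
\]
The Picard--Lindel\"of theorem then yields a unique maximal solution $W:[0,T_{\max})\to\mathbb R^{m\times n}$ with $W(0)=W_0$. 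Uniqueness on any common interval follows from the same local Lipschitz estimate through Gr\"onwall's inequality.

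Next come the a priori bounds. By Lemma~\ref{lem:heps_properties}(iv), $\|\dot W(t)\|_F=\|h_\epsilon(\nabla f(W(t)))\|_F\le\sqrt d$ for every $t\in[0,T_{\max})$. This bound holds for any matrix argument, so it does not depend on boundedness of the trajectory. Writing $W(t)-W_0=\int_0^t\dot W(s)\,ds$ and using the triangle inequality for the Bochner/Riemann integral in the Frobenius norm gives
\[
\|W(t)-W_0\|_F\le \sqrt d\,t .
\]

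Finally, I would show $T_{\max}=\infty$ by contradiction. Suppose $T_{\max}<\infty$. Then the trajectory stays in the compact ball $\{W:\|W-W_0\|_F\le\sqrt d\,T_{\max}\}$. The standard continuation theorem for ODEs with locally Lipschitz right-hand side says a maximal solution with finite $T_{\max}$ must leave every compact set, so this is impossible.

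An alternative route avoids quoting the continuation theorem. Since $\|\dot W\|_F\le\sqrt d$, the trajectory is Cauchy as $t\to T_{\max}$, so $\lim_{t\to T_{\max}}W(t)$ exists. One can then restart the local existence theorem from that limit, which contradicts maximality.

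None of these steps is a genuine obstacle, because the uniform boundedness in Lemma~\ref{lem:heps_properties}(iv) removes any possibility of blow-up. The only points needing care are two. First, Assumption~\ref{ass:local_gradient_lipschitz} gives only local Lipschitz continuity, so the Lipschitz constant has to be taken on the fixed compact ball. Second, the bound $\sqrt d$ must hold uniformly in $\epsilon$ and in the argument; it does, since each singular value $\sigma_i/\sqrt{\sigma_i^2+\epsilon}$ lies below $1$ and there are at most $d$ of them. Assumption~\ref{ass:lower_coercive} is not needed here.
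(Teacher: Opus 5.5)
Your proposal is correct and follows essentially the same route as the paper. Both arguments get local Lipschitz continuity of $F=-h_\epsilon\circ\nabla f$ by composing with the globally Lipschitz $h_\epsilon$, apply Picard--Lindel\"of for a unique maximal solution, use the uniform bound $\|h_\epsilon(\cdot)\|_F\le\sqrt d$ for the velocity and displacement estimates, and invoke the continuation theorem to rule out finite-time escape.
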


The proof is provided in Appendix~\ref{app:direct_wellposedness}.

Let $G(t)=\nabla f(W(t))$. The key dissipation identity is
\begin{align}
	\begin{aligned}
	\frac{d}{dt}f(W(t))
	&=
	-\langle G(t),h_\epsilon(G(t))\rangle=
	-\sum_i
	\frac{\gamma_i(t)^2}
	{\sqrt{\gamma_i(t)^2+\epsilon}},
	\end{aligned}
	\label{eq:direct_flow_energy_dissipation}
\end{align}
where $\gamma_i(t)$ are the singular values of $G(t)$. Hence,
$f(W(t))$ is nonincreasing.

Define the stationary set \(\mathcal S=\{W\in\mathbb R^{m\times n}:\nabla f(W)=0\}\).

\begin{theorem}[Direct spectral flow convergence]
	\label{thm:direct_flow_convergence}
	Suppose that Assumptions~\ref{ass:local_gradient_lipschitz} and
\ref{ass:lower_coercive} hold. Then every solution of
	\eqref{eq:direct_spectral_gradient_flow} is bounded and satisfies
	\(\int_0^\infty\|\nabla f(W(t))\|_F^2\,dt<\infty\) and
	\(\|\nabla f(W(t))\|_F \to 0\). Consequently,
	\(\omega(W_0)\subseteq\mathcal S\).
\end{theorem}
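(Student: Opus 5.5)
The plan is to combine the energy dissipation identity \eqref{eq:direct_flow_energy_dissipation} with coercivity to confine the trajectory to a compact sublevel set, and then run a Barbalat-type argument. By Proposition~\ref{prop:direct_flow_wellposedness} the solution $W(t)$ exists globally and is unique. By \eqref{eq:direct_flow_energy_dissipation}, $t\mapsto f(W(t))$ is nonincreasing, so $W(t)$ stays in the sublevel set $\mathcal L_0=\{W: f(W)\le f(W_0)\}$. By Assumption~\ref{ass:lower_coercive}, this set is bounded, and it is closed by continuity of $f$; hence it is compact. This proves boundedness.

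\textbf{Square integrability.} On the compact set $\mathcal L_0$ the continuous map $\nabla f$ is bounded, so we may set $K=\sup_{W\in\mathcal L_0}\|\nabla f(W)\|_F<\infty$. The lower bound in Lemma~\ref{lem:heps_properties}(v) then gives
\[
-\frac{d}{dt}f(W(t))=\langle G(t),h_\epsilon(G(t))\rangle\ge\frac{\|G(t)\|_F^2}{\sqrt{K^2+\epsilon}}.
\]
Integrating over $[0,T]$ and using $f\ge f_{\inf}$ yields
\[
\int_0^T\|G(t)\|_F^2\,dt\le\sqrt{K^2+\epsilon}\,\bigl(f(W_0)-f_{\inf}\bigr)
\]
uniformly in $T$. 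Letting $T\to\infty$ gives the integrability claim.

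\textbf{Convergence of the gradient.} This is the step requiring the most care, since integrability of $\|G\|_F^2$ alone does not force $G\to0$. I would show that $\phi(t)=\|G(t)\|_F^2$ is uniformly continuous and then apply Barbalat's lemma.
\begin{itemize}
\item By Assumption~\ref{ass:local_gradient_lipschitz}, $\nabla f$ is $L$-Lipschitz on the bounded set $\mathcal L_0$.
\item By Proposition~\ref{prop:direct_flow_wellposedness}, $\|\dot W\|_F\le\sqrt d$, so $\|W(t)-W(s)\|_F\le\sqrt d\,|t-s|$.
\item Combining the two, $\|G(t)-G(s)\|_F\le L\sqrt d\,|t-s|$.
\item Since $\|G\|_F\le K$, we get $|\phi(t)-\phi(s)|\le 2KL\sqrt d\,|t-s|$, so $\phi$ is Lipschitz, hence uniformly continuous.
\end{itemize}
Barbalat's lemma then gives $\phi(t)\to0$, i.e.\ $\|\nabla f(W(t))\|_F\to0$. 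For completeness, the lemma can be proved inline. Suppose $\phi(t_k)\ge\delta>0$ along some $t_k\to\infty$. The Lipschitz bound keeps $\phi\ge\delta/2$ on intervals of fixed length around each $t_k$. Passing to a subsequence with disjoint intervals, each contributes a fixed positive amount to $\int\phi$, contradicting finiteness of the integral.

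\textbf{Omega-limit set.} Finally, take $\bar W\in\omega(W_0)$ with $W(t_k)\to\bar W$. Continuity of $\nabla f$ gives $\nabla f(\bar W)=\lim_k\nabla f(W(t_k))=0$, so $\bar W\in\mathcal S$. Boundedness of the trajectory also guarantees that $\omega(W_0)$ is nonempty, although the inclusion itself does not require this.
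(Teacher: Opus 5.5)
Your proposal is correct and follows the paper's proof essentially step for step. Both confine the trajectory to the compact sublevel set $\mathcal L_0$, use the lower sector bound from Lemma~\ref{lem:heps_properties}(v) with $K_G=\sup_{\mathcal L_0}\|\nabla f\|_F$ to get square integrability, then combine the Lipschitz gradient on $\mathcal L_0$ with $\|\dot W\|_F\le\sqrt d$ to obtain uniform continuity and apply Barbalat's lemma, and finish with continuity of $\nabla f$ for the $\omega$-limit inclusion.
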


The proof is provided in Appendix~\ref{app:direct_convergence}.

\subsection{Momentum-Augmented System}

The same spectral feedback law can be used in the momentum-augmented system
\begin{align}
	\begin{cases}
		\dot W=-h_\epsilon(M),\\[1mm]
		\dot M=a\nabla f(W)-bM,
	\end{cases}
	\qquad a>0,\quad b>0.
	\label{eq:momentum_augmented_flow}
\end{align}
Here $M(t)$ is an auxiliary momentum state. Define
\begin{align}
	V(W,M)
	=
	a\bigl(f(W)-f_{\inf}\bigr)+\Phi_\epsilon(M).
	\label{eq:momentum_storage}
\end{align}
Using $\nabla\Phi_\epsilon=h_\epsilon$, the cross terms cancel and yield
the exact identity
\begin{align}
	\dot V(W(t),M(t))
	=
	-b\langle h_\epsilon(M(t)),M(t)\rangle
	\le0.
	\label{eq:momentum_storage_dissipation}
\end{align}

\begin{theorem}[Momentum-augmented flow convergence]
	\label{thm:momentum_flow_convergence}
	Suppose that Assumptions~\ref{ass:local_gradient_lipschitz} and
\ref{ass:lower_coercive} hold. Every solution of
	\eqref{eq:momentum_augmented_flow} is global and bounded, and
	\(\omega(W_0,M_0)\subseteq\mathcal S\times\{0\}\).
	Equivalently, every limit point satisfies
	$\nabla f(W)=0$ and $M=0$.
\end{theorem}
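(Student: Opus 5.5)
The plan is to use the storage function $V$ from \eqref{eq:momentum_storage} as a Lyapunov function and then apply a LaSalle-type argument.

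\textbf{Local existence and global extension.} The vector field $(W,M)\mapsto(-h_\epsilon(M),\,a\nabla f(W)-bM)$ is locally Lipschitz. This follows from Lemma~\ref{lem:heps_properties}(i) and Assumption~\ref{ass:local_gradient_lipschitz}, so a unique maximal solution exists on some interval $[0,T_{\max})$. By \eqref{eq:momentum_storage_dissipation}, $V(W(t),M(t))\le V(W_0,M_0)$ on this interval. Both terms of $V$ are nonnegative: $f-f_{\inf}\ge0$ by Assumption~\ref{ass:lower_coercive}, and $\Phi_\epsilon(M)=\sum_i(\sqrt{\mu_i^2+\epsilon}-\sqrt\epsilon)\ge0$, where $\mu_i$ are the singular values of $M$. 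Hence $f(W(t))\le f_{\inf}+V(W_0,M_0)/a$, and coercivity confines $W(t)$ to a compact sublevel set.

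For $M$, the scalar inequality $\sqrt{\mu^2+\epsilon}-\sqrt\epsilon\ge \mu-\sqrt\epsilon$ gives $\Phi_\epsilon(M)\ge \|M\|_*-d\sqrt\epsilon\ge\|M\|_F-d\sqrt\epsilon$. Therefore $M(t)$ is bounded as well. Since the trajectory stays in a compact set, it cannot blow up in finite time, so $T_{\max}=\infty$ and the solution is bounded.

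\textbf{Integrability of the dissipation.} Integrating \eqref{eq:momentum_storage_dissipation} gives $\int_0^\infty\langle h_\epsilon(M),M\rangle\,dt\le V(W_0,M_0)/b<\infty$. Let $K$ be a uniform bound on $\|M(t)\|_F$. Lemma~\ref{lem:heps_properties}(v) then yields $\int_0^\infty\|M(t)\|_F^2\,dt<\infty$.

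The derivative $\frac{d}{dt}\|M\|_F^2=2\langle M,a\nabla f(W)-bM\rangle$ is bounded, because $\nabla f$ is bounded on the compact set containing $W(t)$. So $\|M\|_F^2$ is uniformly continuous, and Barbalat's lemma gives $M(t)\to0$.

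\textbf{Limit of the gradient.} This is the main obstacle: the dissipation controls only $M$, not $\nabla f(W)$. I would try Barbalat's lemma on $\dot M=a\nabla f(W)-bM$ first. Since $M(t)\to0$, the integral $\int_0^t\dot M\,ds=M(t)-M_0$ converges as $t\to\infty$. The function $\dot M$ is uniformly continuous: $\nabla f$ is Lipschitz on the compact set containing $W(t)$, $\dot W$ is bounded by $\sqrt d$ (Lemma~\ref{lem:heps_properties}(iv)), and $\dot M$ itself is bounded. Barbalat's lemma therefore gives $\dot M(t)\to0$. Hence $a\nabla f(W(t))=\dot M(t)+bM(t)\to0$.

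\textbf{Conclusion.} For any limit point $(\bar W,\bar M)$, continuity of $\nabla f$ gives $\nabla f(\bar W)=0$, and $\bar M=0$. Thus $\omega(W_0,M_0)\subseteq\mathcal S\times\{0\}$.

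As an alternative route, one can invoke LaSalle's invariance principle directly. The $\omega$-limit set is invariant and lies in $\{M=0\}$, where $V$ is constant. Invariance forces $\dot M=a\nabla f(W)=0$ on this set, which gives the same conclusion.
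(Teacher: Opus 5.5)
Your proof is correct, but its main step takes a different route from the paper's. The paper proves global existence separately, using $\|\dot W\|_F\le\sqrt d$ and Gronwall on the linear $M$-equation. It then uses $V(W(t),M(t))\le V_0$ only to get boundedness, and finishes with LaSalle's invariance principle. The zero-dissipation set is $\{M=0\}$, and on an invariant subset of it, $\dot M=0$ forces $\nabla f(W)=0$. That is exactly your closing ``alternative route.'' Your primary argument instead replaces LaSalle by two applications of Barbalat's lemma. First you apply it to $\|M\|_F^2$, which is integrable by the sector bound and Lipschitz in $t$, to get $M(t)\to0$. Then you apply it to $\dot M$ itself. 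This uses that $\int_0^t\dot M\,ds=M(t)-M_0$ converges, and that $\dot M$ is Lipschitz in $t$ by local Lipschitz continuity of $\nabla f$ and $\|\dot W\|_F\le\sqrt d$. The identity $a\nabla f(W)=\dot M+bM$ then transfers the decay from the momentum channel to the gradient. Your version is more elementary, since it needs no invariance of $\omega$-limit sets. It gives $M(t)\to0$ and $\nabla f(W(t))\to0$ along the whole trajectory directly. It also produces the bound $\int_0^\infty\|M\|_F^2\,dt<\infty$ of Lemma~\ref{thm:momentum_flow_trajectory_bounds} as a byproduct, and it mirrors the Barbalat argument the paper uses for the direct flow. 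The LaSalle route is shorter and more structural: it needs only precompactness of the trajectory and the identification of the largest invariant set in $\{\dot V=0\}$. Finally, you use the a priori bound on $V$ over the maximal interval to get global existence and boundedness at once. That is slightly more economical than the paper's two-step treatment, and equally valid.
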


The proof is provided in Appendix~\ref{app:momentum_convergence}.

\begin{remark}[Scope of global convergence]
Theorems~\ref{thm:direct_flow_convergence} and~\ref{thm:momentum_flow_convergence}
establish convergence to the stationary set. Without further structural assumptions,
such as isolated stationary points, they do not by themselves imply convergence to a
unique stationary point, nor do they quantify how rapidly the
trajectory approaches the stationary set. This distinction
motivates the quantitative estimates developed in the next
section.
\end{remark}

\subsection{Approximation of the Ideal Matrix-Polar Flow}
\label{sec:polar-flow-approximation}

The preceding analysis treats the smoothing parameter
$\epsilon>0$ as fixed. We now clarify in what sense the proposed
smoothed spectral feedback approximates the exact matrix-polar
feedback underlying Muon-type update directions.

Let $d=\min\{m,n\}$. For a full-column-rank matrix
$Z\in\mathbb{R}^{m\times n}$ with compact singular value
decomposition
\[
Z=U\operatorname{diag}(\sigma_1,\ldots,\sigma_d)V^\top,
    \qquad
    \sigma_{\min}(Z)>0,
\]
recall that the canonical matrix-polar factor is
$\operatorname{Polar}(Z)=UV^\top=Z(Z^\top Z)^{-1/2}.
$

The smoothed feedback can be written as
\[
    h_\epsilon(Z)
    =
    U\operatorname{diag}\left(
        \frac{\sigma_i}{\sqrt{\sigma_i^2+\epsilon}}
    \right)V^\top.
\]
Therefore, $h_\epsilon(Z)$ approaches $\operatorname{Polar}(Z)$ whenever
the singular values of $Z$ remain separated from zero.

To describe the corresponding trajectory-level approximation,
consider the ideal direct matrix-polar flow
\begin{equation}
    \dot W_{\mathrm P}(t)
    =
    -\operatorname{Polar}\bigl(\nabla f(W_{\mathrm P}(t))\bigr),
    \qquad
    W_{\mathrm P}(0)=W_0,
    \label{eq:ideal-direct-polar-flow}
\end{equation}
and the ideal momentum-augmented matrix-polar flow
\begin{equation}
    \begin{alignedat}{2}
        &\dot W_{\mathrm P}(t)
        &&=
        -\operatorname{Polar}(M_{\mathrm P}(t)),\\
        &\dot M_{\mathrm P}(t)
        &&=
        a\nabla f(W_{\mathrm P}(t))-bM_{\mathrm P}(t),\\
        &(W_{\mathrm P}(0),M_{\mathrm P}(0))
        &&=
        (W_0,M_0).
    \end{alignedat}
    \label{eq:ideal-momentum-polar-flow}
\end{equation}

\begin{proposition}[Finite-horizon approximation]
\label{prop:finite-horizon-polar-approximation}
Suppose that Assumption~\ref{ass:local_gradient_lipschitz} holds, and let
$\underline{\sigma}>0$.

First, for every full-rank matrix $Z$ satisfying
\[
    \sigma_{\min}(Z)\geq \underline{\sigma},
\]
the feedback approximation satisfies
\begin{equation}
    \left\|
        h_\epsilon(Z)-\operatorname{Polar}(Z)
    \right\|_F
    \leq
    \sqrt{d}
    \left(
        1-
        \frac{\underline{\sigma}}
        {\sqrt{\underline{\sigma}^2+\epsilon}}
    \right)
    \leq
    \frac{\sqrt{d}}{2\underline{\sigma}^2}\epsilon.
    \label{eq:uniform-feedback-approximation}
\end{equation}

Moreover, fix any $T>0$.

\begin{enumerate}
    \item Suppose that the ideal direct flow
    \eqref{eq:ideal-direct-polar-flow} has a classical solution
    on $[0,T]$ satisfying
    \begin{equation}
        \inf_{0\leq t\leq T}
        \sigma_{\min}\bigl(\nabla f(W_{\mathrm P}(t))\bigr)
        \geq 2\underline{\sigma}.
        \label{eq:direct-rank-safe-condition}
    \end{equation}
    Let $W_\epsilon(t)$ be the solution of the smoothed direct
    flow~\eqref{eq:direct_spectral_gradient_flow} with the same initial condition.
    Then there exist constants
    $\epsilon_T^{\mathrm{dir}}>0$ and
    $C_T^{\mathrm{dir}}>0$, independent of $\epsilon$, such that
    \begin{equation}
        \sup_{0\leq t\leq T}
        \left\|
            W_\epsilon(t)-W_{\mathrm P}(t)
        \right\|_F
        \leq
        C_T^{\mathrm{dir}}\epsilon,
        \qquad
        0<\epsilon\leq\epsilon_T^{\mathrm{dir}}.
        \label{eq:direct-trajectory-approximation}
    \end{equation}

    \item Suppose that the ideal momentum flow
    \eqref{eq:ideal-momentum-polar-flow} has a classical solution
    on $[0,T]$ satisfying
    \begin{equation}
        \inf_{0\leq t\leq T}
        \sigma_{\min}(M_{\mathrm P}(t))
        \geq 2\underline{\sigma}.
        \label{eq:momentum-rank-safe-condition}
    \end{equation}
    Let $(W_\epsilon(t),M_\epsilon(t))$ be the solution of the
    smoothed momentum system~\eqref{eq:momentum_augmented_flow} with the
    same initial condition. Then there exist constants
    $\epsilon_T^{\mathrm{mom}}>0$ and
    $C_T^{\mathrm{mom}}>0$, independent of $\epsilon$, such that
    \begin{equation}
    \begin{alignedat}{2}
        &E_{\mathrm{mom}}(t)
        &&:=
        \|W_\epsilon-W_{\mathrm P}\|_F
        +
        \|M_\epsilon-M_{\mathrm P}\|_F,\\
        &\sup_{0\leq t\leq T}E_{\mathrm{mom}}(t)
        &&\leq
        C_T^{\mathrm{mom}}\epsilon,
        \qquad
        0<\epsilon\leq\epsilon_T^{\mathrm{mom}}.
    \end{alignedat}
        \label{eq:momentum-trajectory-approximation}
    \end{equation}
\end{enumerate}

Thus, on every finite time interval over which the relevant
gradient or momentum singular values remain uniformly separated
from zero, the smoothed spectral flow converges uniformly to the
corresponding ideal matrix-polar flow as $\epsilon \to 0$.
\end{proposition}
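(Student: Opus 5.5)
The plan has two parts: a pointwise feedback estimate, and then a trajectory comparison by a Gr\"onwall argument. The Gr\"onwall argument is run inside a neighborhood where the polar map is Lipschitz, and a continuation argument guarantees that the smoothed trajectory never leaves that neighborhood.

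For the feedback estimate, write $Z=U\operatorname{diag}(\sigma_i)V^\top$. Then
\[
h_\epsilon(Z)-\operatorname{Polar}(Z)=U\operatorname{diag}\bigl(\phi(\sigma_i)-1\bigr)V^\top,
\qquad
\phi(s)=\frac{s}{\sqrt{s^2+\epsilon}}.
\]
By orthogonal invariance the Frobenius norm of this matrix is $\bigl(\sum_i(1-\phi(\sigma_i))^2\bigr)^{1/2}$. Since $\phi$ is increasing, each term satisfies $1-\phi(\sigma_i)\le 1-\phi(\underline\sigma)$, which gives the first bound in \eqref{eq:uniform-feedback-approximation}. For the second bound, write
\[
1-\frac{s}{\sqrt{s^2+\epsilon}}=\frac{\sqrt{s^2+\epsilon}-s}{\sqrt{s^2+\epsilon}}=\frac{\epsilon}{\sqrt{s^2+\epsilon}\,\bigl(\sqrt{s^2+\epsilon}+s\bigr)}\le\frac{\epsilon}{2s^2}.
\]
Evaluating at $s=\underline\sigma$ gives the claim.

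Next I need a local Lipschitz bound for $\operatorname{Polar}$ on the set $\{\sigma_{\min}\ge\underline\sigma\}$. I would show that $\|\operatorname{Polar}(A)-\operatorname{Polar}(B)\|_F\le L_P\|A-B\|_F$ there, with $L_P$ of order $1/\underline\sigma$ (e.g.\ $2/\underline\sigma$ from standard perturbation bounds for the polar factor). One route is a Fr\'echet-derivative bound; a simpler route is to note that $\operatorname{Polar}=\lim_{\epsilon\to 0} h_\epsilon$, and $h_\epsilon$ is smooth with derivative controlled by the Daleckii--Krein divided differences $\bigl(\phi(\sigma_i)+\phi(\sigma_j)\bigr)/(\sigma_i+\sigma_j)$ and $\phi'$. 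Weyl's inequality guarantees that the straight segment between $A$ and $B$ stays in $\{\sigma_{\min}\ge\underline\sigma\}$ whenever $\|A-B\|_F\le\underline\sigma$ and $\sigma_{\min}(A)\ge2\underline\sigma$. This factor-of-$2$ margin is exactly why the hypotheses use $2\underline\sigma$.

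For the direct flow, let $K=\{W:\operatorname{dist}(W,W_{\mathrm P}([0,T]))\le1\}$, which is compact. Let $L$ be the Lipschitz constant of $\nabla f$ on $K$. Set $e(t)=W_\epsilon(t)-W_{\mathrm P}(t)$ and let $\tau$ be the first time $\|e\|_F$ reaches $r:=\min\{1,\underline\sigma/L\}$, with $\tau=T$ if this never happens. On $[0,\tau]$ we have $\|\nabla f(W_\epsilon)-\nabla f(W_{\mathrm P})\|_F\le Lr\le\underline\sigma$. By Weyl, $\sigma_{\min}(\nabla f(W_\epsilon))\ge\underline\sigma$, and the segment between the two gradients stays in the safe region. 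Split
\[
\dot e=-\bigl[h_\epsilon(G_\epsilon)-\operatorname{Polar}(G_\epsilon)\bigr]-\bigl[\operatorname{Polar}(G_\epsilon)-\operatorname{Polar}(G_{\mathrm P})\bigr],
\]
where $G_\epsilon=\nabla f(W_\epsilon)$ and $G_{\mathrm P}=\nabla f(W_{\mathrm P})$. This gives
\[
\frac{d}{dt}\|e\|_F\le\frac{\sqrt d\,\epsilon}{2\underline\sigma^2}+L_PL\|e\|_F .
\]
Gr\"onwall then yields $\|e(t)\|_F\le C_T^{\mathrm{dir}}\epsilon$ with $C_T^{\mathrm{dir}}=\frac{\sqrt d}{2\underline\sigma^2L_PL}\bigl(e^{L_PLT}-1\bigr)$. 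Choosing $\epsilon_T^{\mathrm{dir}}$ so that $C_T^{\mathrm{dir}}\epsilon<r$ shows $\tau=T$, which closes the continuation argument.

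The momentum case is identical, with $E=\|W_\epsilon-W_{\mathrm P}\|_F+\|M_\epsilon-M_{\mathrm P}\|_F$. Here the rank-safety condition is imposed on $M$ directly, so I stop at the first time $\|M_\epsilon-M_{\mathrm P}\|_F$ reaches $\underline\sigma$ or $\|W_\epsilon-W_{\mathrm P}\|_F$ reaches $1$. The two error equations are
\[
\frac{d}{dt}\|\Delta W\|_F\le\frac{\sqrt d\,\epsilon}{2\underline\sigma^2}+L_P\|\Delta M\|_F,
\qquad
\frac{d}{dt}\|\Delta M\|_F\le aL\|\Delta W\|_F+b\|\Delta M\|_F .
\]
Summing them and applying Gr\"onwall gives $E\le C_T^{\mathrm{mom}}\epsilon$ with rate constant $\max\{aL,\,L_P+b\}$.

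The main obstacle is the quantitative local Lipschitz constant of $\operatorname{Polar}$ on the rank-safe set, together with the bootstrapping that keeps the smoothed trajectory inside that set. If the $2/\underline\sigma$ perturbation bound cannot be cited cleanly, I would instead bound the derivative of $h_\delta$ uniformly in $\delta$ on the set $\{\sigma_{\min}\ge\underline\sigma\}$ and pass to the limit $\delta\to0$.
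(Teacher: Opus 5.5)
Your proposal is correct and follows essentially the same route as the paper. The pointwise bound uses the same SVD argument; each flow is then handled by a Gr\"onwall estimate inside a rank-safe neighborhood of the ideal trajectory, closed by a first-exit-time (no-exit) argument. The one difference is that you make the neighborhood and the Lipschitz constant explicit (tube radius $\min\{1,\underline\sigma/L\}$ via Weyl's inequality, and $L_P=\mathcal O(1/\underline\sigma)$ for $\operatorname{Polar}$), whereas the paper only asserts that such a neighborhood and constant exist, by compactness, continuity of singular values, and smoothness of $\operatorname{Polar}$ on the full-rank set.
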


The proof is provided in
Appendix~\ref{app:finite-horizon-polar-approximation}.

\begin{remark}[Scope of the matrix-polar approximation]
\label{rem:scope-polar-approximation}
Proposition~\ref{prop:finite-horizon-polar-approximation}
describes an ideal rank-safe regime in which the smoothed
spectral feedback reproduces the matrix-polar direction used by
Muon-type methods. It should not be interpreted as a complete
continuous-time characterization of the discrete Muon
algorithm. In particular, the proposition compares two
continuous-time vector fields on a fixed finite time interval;
it does not account for finite stepsizes, the discrete momentum
recursion, stochastic gradients, or numerical approximations of
the polar factor such as finite Newton--Schulz iterations.

The uniform lower singular-value assumption is essential. If the smallest relevant singular
value is allowed to approach zero, the constant in
\eqref{eq:uniform-feedback-approximation} degenerates, and the
convergence of $h_\epsilon$ to the matrix-polar map is no longer
uniform near rank changes. For example, in the scalar case,
taking $z_\epsilon=\epsilon>0$ gives
\[
    h_\epsilon(z_\epsilon)
    =
    \frac{\epsilon}{\sqrt{\epsilon^2+\epsilon}}
    =
    \sqrt{\frac{\epsilon}{1+\epsilon}}
    \longrightarrow 0,
\]
whereas the exact polar direction satisfies
$\operatorname{Polar}(z_\epsilon)=1$. Hence, although
$h_\epsilon(z)\to\operatorname{Polar}(z)$ for each fixed $z\neq0$, this
convergence is not uniform as $z$ approaches zero.

Consequently, the proposition provides a local-in-rank and
finite-horizon connection with ideal matrix-polar feedback, but
it does not capture the full dynamical behavior of Muon near
small singular values or rank transitions. Constructing a
continuous-time model that remains valid across such singular
regimes and more faithfully represents the complete Muon
dynamics is left for future work.
\end{remark}

\section{Convergence Rate Results}
\label{sec:trajectory_convergence}
The preceding results establish the stability and convergence structure of the direct and momentum-augmented spectral gradient flows. We now derive quantitative convergence rate results by converting the dissipation of \(h_\epsilon\) into integral gradient bounds, stationarity rates, objective-gap bounds, and averaged convergence certificates. For each system, the analysis begins with a common integral estimate and then specializes that estimate to different objective landscapes.

\subsection{Convergence Rate of the Spectral Gradient Flow}
Along the direct flow \eqref{eq:direct_spectral_gradient_flow}, define the initial sublevel set and constants
\[
\mathcal L_0=\{W:f(W)\le f(W_0)\},
\qquad
K_G=\sup_{W\in\mathcal L_0}\|\nabla f(W)\|_F,
\]
\[
c_\epsilon=\frac1{\sqrt{K_G^2+\epsilon}}.
\]
Coercivity makes $\mathcal L_0$ bounded. The dissipation identity implies
\begin{align}
	\frac{d}{dt}f(W(t))
	\le
	-c_\epsilon\|\nabla f(W(t))\|_F^2.
	\label{eq:direct_flow_dissipation_again}
\end{align}

\begin{lemma}[Direct flow trajectory bounds]
	\label{thm:direct_flow_trajectory_bounds}
	Suppose that Assumptions~\ref{ass:local_gradient_lipschitz} and
	\ref{ass:lower_coercive} hold. For every $T>0$, the solution $W(t)$ satisfies
	\[
	\int_0^T\|\nabla f(W(t))\|_F^2\,dt
	\le
	\frac{f(W_0)-f_{\inf}}{c_\epsilon},
	\]
	and
	\[
	\int_0^T\|\dot W(t)\|_F^2\,dt
	\le
	\frac{f(W_0)-f_{\inf}}{\sqrt{\epsilon}}.
	\]
	In particular, both integrals remain finite on $[0,\infty)$.
\end{lemma}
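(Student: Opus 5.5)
The plan is to integrate the dissipation inequality \eqref{eq:direct_flow_dissipation_again} over $[0,T]$ and then compare $\|\dot W\|_F^2$ with the dissipation rate. The one delicate point is that \eqref{eq:direct_flow_dissipation_again} requires $\|\nabla f(W(t))\|_F\le K_G$ along the trajectory. We first check that the trajectory stays in $\mathcal L_0$. Proposition~\ref{prop:direct_flow_wellposedness} gives a unique global $C^1$ solution. By the dissipation identity \eqref{eq:direct_flow_energy_dissipation}, $t\mapsto f(W(t))$ is nonincreasing, so $f(W(t))\le f(W_0)$ and $W(t)\in\mathcal L_0$ for all $t\ge0$.

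Next we confirm that $K_G<\infty$. Coercivity (Assumption~\ref{ass:lower_coercive}) makes $\mathcal L_0$ bounded, and continuity of $f$ makes it closed, hence compact. Since $\nabla f$ is continuous (Assumption~\ref{ass:local_gradient_lipschitz}), it is bounded on $\mathcal L_0$. Therefore $\|G(t)\|_F\le K_G$ along the trajectory, where $G(t)=\nabla f(W(t))$. Lemma~\ref{lem:heps_properties}(v) with $K=K_G$ then gives $\langle h_\epsilon(G),G\rangle\ge c_\epsilon\|G\|_F^2$, which is exactly \eqref{eq:direct_flow_dissipation_again}.

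For the first bound, integrate \eqref{eq:direct_flow_dissipation_again} from $0$ to $T$:
\[
c_\epsilon\int_0^T\|G\|_F^2\,dt\le f(W_0)-f(W(T))\le f(W_0)-f_{\inf}.
\]
Dividing by $c_\epsilon$ gives the claim.

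For the second bound, use the SVD form \eqref{eq:h_epsilon_svd}. With $\gamma_i$ the singular values of $G$,
\[
\|\dot W\|_F^2=\|h_\epsilon(G)\|_F^2=\sum_i\frac{\gamma_i^2}{\gamma_i^2+\epsilon}.
\]
For each $i$,
\[
\frac{\gamma_i^2}{\gamma_i^2+\epsilon}=\frac{1}{\sqrt{\gamma_i^2+\epsilon}}\cdot\frac{\gamma_i^2}{\sqrt{\gamma_i^2+\epsilon}}\le\frac{1}{\sqrt\epsilon}\cdot\frac{\gamma_i^2}{\sqrt{\gamma_i^2+\epsilon}}.
\]
Summing over $i$ and comparing with \eqref{eq:direct_flow_energy_dissipation} yields
\[
\|\dot W\|_F^2\le\frac{1}{\sqrt\epsilon}\Bigl(-\frac{d}{dt}f(W(t))\Bigr).
\]
Integrating over $[0,T]$ and using $f\ge f_{\inf}$ gives the second estimate.

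Both right-hand sides are independent of $T$. Letting $T\to\infty$ by monotone convergence shows the integrals over $[0,\infty)$ are finite.

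The main obstacle is the first step: justifying the uniform bound $K_G$ along the trajectory via sublevel-set invariance and compactness. Once that holds, everything else follows from integrating the pointwise estimates.
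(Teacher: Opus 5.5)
Your proposal is correct and follows the paper's proof essentially step for step: integrate the dissipation inequality $\frac{d}{dt}f(W(t))\le -c_\epsilon\|\nabla f(W(t))\|_F^2$, then combine the singular-value comparison $\|h_\epsilon(G)\|_F^2\le \epsilon^{-1/2}\langle G,h_\epsilon(G)\rangle$ with the exact energy identity. Your check that the trajectory stays in the compact sublevel set $\mathcal L_0$, so that $K_G<\infty$, is the same justification the paper gives in the proof of Theorem~\ref{thm:direct_flow_convergence}, which its proof of this lemma relies on.
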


The proof is provided in Appendix~\ref{app:direct_trajectory_bounds}.

Lemma~\ref{thm:direct_flow_trajectory_bounds} provides the basic
dissipation budget for the direct flow. Without additional structure on
the objective, averaging the gradient-energy estimate over a finite time
horizon immediately yields a nonconvex stationarity certificate.

\begin{theorem}[Direct nonconvex stationarity rate]
	\label{thm:direct_flow_nonconvex_rate}
	Under the assumptions of Lemma~\ref{thm:direct_flow_trajectory_bounds},
	\[
	\min_{0\le t\le T}\|\nabla f(W(t))\|_F^2
	\le
	\frac{f(W_0)-f_{\inf}}{c_\epsilon T},
	\qquad T>0.
	\]
	Equivalently, the minimum squared gradient norm is
	$\mathcal O(1/T)$. Moreover, $\|\nabla f(W(t))\|_F\to0$.
\end{theorem}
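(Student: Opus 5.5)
The bound is a direct averaging argument applied to the first estimate of Lemma~\ref{thm:direct_flow_trajectory_bounds}. We fix $T>0$. By Proposition~\ref{prop:direct_flow_wellposedness} the solution exists on $[0,T]$ and is $C^1$. Assumption~\ref{ass:local_gradient_lipschitz} makes $\nabla f$ continuous, so $t\mapsto\|\nabla f(W(t))\|_F^2$ is continuous on the compact interval $[0,T]$. Its minimum is therefore attained at some $t^\star\in[0,T]$. Since the integrand is bounded below pointwise by this minimum, we get
\[
T\min_{0\le t\le T}\|\nabla f(W(t))\|_F^2
\le
\int_0^T\|\nabla f(W(t))\|_F^2\,dt
\le
\frac{f(W_0)-f_{\inf}}{c_\epsilon},
\]
where the last inequality is Lemma~\ref{thm:direct_flow_trajectory_bounds}. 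Dividing by $T$ gives the claimed bound. The constant $c_\epsilon=(K_G^2+\epsilon)^{-1/2}$ is strictly positive, because coercivity (Assumption~\ref{ass:lower_coercive}) makes $\mathcal L_0$ bounded and continuity of $\nabla f$ then makes $K_G$ finite. The $\mathcal O(1/T)$ reformulation follows at once, since the numerator and $c_\epsilon$ do not depend on $T$.

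To keep this self-contained, we recall why the integral bound holds. Since $f(W(t))$ is nonincreasing by \eqref{eq:direct_flow_energy_dissipation}, the trajectory stays in $\mathcal L_0$, so $\|\nabla f(W(t))\|_F\le K_G$ along it. The lower sector estimate of Lemma~\ref{lem:heps_properties}(v) with $K=K_G$ then yields \eqref{eq:direct_flow_dissipation_again}. Integrating from $0$ to $T$ and using $f\ge f_{\inf}$ gives the estimate. We will simply cite Lemma~\ref{thm:direct_flow_trajectory_bounds} for this.

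The final assertion $\|\nabla f(W(t))\|_F\to0$ is exactly the content of Theorem~\ref{thm:direct_flow_convergence}, which holds under the same assumptions, so we will invoke it directly. If a self-contained argument is preferred, we would proceed in three steps:
\begin{itemize}
\item The trajectory is bounded, since it lies in $\mathcal L_0$.
\item On that bounded set, $\nabla f$ is Lipschitz with some constant $L$, and $\|\dot W\|_F\le\sqrt d$. Hence $g(t)=\|\nabla f(W(t))\|_F^2$ is Lipschitz in $t$, with constant at most $2K_GL\sqrt d$.
\item A nonnegative, uniformly continuous, integrable function on $[0,\infty)$ tends to zero (Barbalat's lemma).
\end{itemize}
The only step that needs care is this uniform-continuity check, and it is already handled in the proof of Theorem~\ref{thm:direct_flow_convergence}. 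We expect no real obstacle.
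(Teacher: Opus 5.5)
Your proposal is correct and matches the paper's proof: you bound the minimum by the time average using the integral estimate of Lemma~\ref{thm:direct_flow_trajectory_bounds}, then cite Theorem~\ref{thm:direct_flow_convergence} for $\|\nabla f(W(t))\|_F\to0$. Your extra remarks are accurate and only spell out details the paper leaves implicit: that the minimum is attained by continuity, that $K_G$ is finite, and that $2K_GL\sqrt d$ is an explicit Lipschitz constant for the Barbalat step.
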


The proof is provided in Appendix~\ref{app:direct_nonconvex}.

The preceding estimate controls the minimum gradient norm along the
trajectory. Under convexity, the gradient also controls the objective gap
through the distance to the solution set, which converts the same
dissipation inequality into a pointwise objective estimate. Assume now
that $f$ is convex,
\(\mathcal W^*=\operatorname*{arg\,min}_{W}f(W)\ne\emptyset\), and
\(f_*=\min_W f(W)\). Define
\(R_0=\sup_{W\in\mathcal L_0}\operatorname{dist}(W,\mathcal W^*)\).

\begin{theorem}[Direct convex convergence rate]
	\label{thm:direct_flow_convex_rate}
	Suppose that Assumptions~\ref{ass:local_gradient_lipschitz} and
	\ref{ass:lower_coercive} hold and that $f$ is convex. If
	$f(W_0)>f_*$, then
	\[
	f(W(t))-f_*
	\le
	\frac{R_0^2(f(W_0)-f_*)}
	{R_0^2+c_\epsilon t(f(W_0)-f_*)},
	\qquad t\ge0.
	\]
	In particular, $f(W(t))-f_*=\mathcal O(1/t)$.
\end{theorem}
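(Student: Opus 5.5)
The plan is to turn the dissipation inequality \eqref{eq:direct_flow_dissipation_again} into a Riccati-type differential inequality for the objective gap $\delta(t)=f(W(t))-f_*$. Since $f(W(t))$ is nonincreasing by \eqref{eq:direct_flow_energy_dissipation}, the trajectory stays in $\mathcal L_0$. This means $\|\nabla f(W(t))\|_F\le K_G$, so $c_\epsilon$ is a valid constant for all $t\ge0$ and
\[
\dot\delta(t)\le -c_\epsilon\|\nabla f(W(t))\|_F^2 .
\]
Coercivity makes $\mathcal L_0$ bounded. Also, $\mathcal W^*$ is closed and convex: it is a sublevel set of a continuous convex function. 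Hence $R_0<\infty$, and for each $t$ the projection $W^*(t)$ of $W(t)$ onto $\mathcal W^*$ exists and satisfies $\|W(t)-W^*(t)\|_F=\operatorname{dist}(W(t),\mathcal W^*)\le R_0$.

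Next I use convexity to bound the gap by the gradient. The first-order convexity inequality and Cauchy--Schwarz give
\[
\delta(t)\le\langle\nabla f(W(t)),W(t)-W^*(t)\rangle\le R_0\|\nabla f(W(t))\|_F .
\]
Squaring this and inserting it into the dissipation bound gives $\dot\delta\le -(c_\epsilon/R_0^2)\,\delta^2$. If $R_0=0$, then $\mathcal L_0\subseteq\mathcal W^*$ and so $f(W_0)=f_*$, which contradicts $f(W_0)>f_*$; therefore $R_0>0$.

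The remaining step is to integrate this inequality. If $\delta(t_1)=0$ for some $t_1$, monotonicity gives $\delta\equiv0$ for $t\ge t_1$, and the bound holds trivially there. On the interval where $\delta>0$, the function $1/\delta$ is absolutely continuous because $\delta$ is $C^1$, and
\[
\frac{d}{dt}\frac1{\delta}=-\frac{\dot\delta}{\delta^2}\ge\frac{c_\epsilon}{R_0^2}.
\]
Integrating from $0$ to $t$ gives $1/\delta(t)\ge 1/\delta(0)+c_\epsilon t/R_0^2$. Inverting this yields exactly
\[
\delta(t)\le\frac{R_0^2\delta(0)}{R_0^2+c_\epsilon t\,\delta(0)},
\]
and the $\mathcal O(1/t)$ statement follows directly.

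I expect the main difficulty to be bookkeeping rather than analysis. The points to keep consistent are that $c_\epsilon$ is uniform along the whole trajectory (via sublevel-set invariance), that $R_0$ is finite and positive, and that the case where the gap reaches zero in finite time is handled.
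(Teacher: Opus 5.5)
Your proof is correct and follows essentially the same route as the paper. Convexity plus Cauchy--Schwarz gives $\delta(t)\le R_0\|\nabla f(W(t))\|_F$, which turns the dissipation bound into the Riccati inequality $\dot\delta\le-(c_\epsilon/R_0^2)\delta^2$, and integrating $\frac{d}{dt}(1/\delta)\ge c_\epsilon/R_0^2$ yields the stated estimate. Your extra bookkeeping only tightens the paper's argument: taking $W^*(t)$ to be the nearest point of $\mathcal W^*$ is what actually justifies the bound by $R_0$ (the paper writes ``for any $W^*$''), and you also check $R_0>0$ and handle a possible zero of the gap.
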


The proof is provided in Appendix~\ref{app:direct_convex}.

Convexity gives a sublinear objective-gap bound. A PL inequality provides
a stronger gradient-dominance relation, so the dissipation estimate
closes directly as a linear differential inequality in the objective
gap and yields exponential decay.

\begin{theorem}[Direct PL exponential convergence]
	\label{thm:direct_flow_pl_rate}
	Suppose that Assumptions~\ref{ass:local_gradient_lipschitz} and
	\ref{ass:lower_coercive} hold and that, on $\mathcal L_0$,
	\[
	\frac12\|\nabla f(W)\|_F^2
	\ge
	\mu(f(W)-f_*).
	\]
	Then
	\[
	f(W(t))-f_*
	\le
	e^{-2\mu c_\epsilon t}(f(W_0)-f_*),
	\qquad t\ge0.
	\]
	In particular, the estimate applies when $f$ is
	$\mu$-strongly convex.
\end{theorem}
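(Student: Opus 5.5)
The argument closes the dissipation inequality \eqref{eq:direct_flow_dissipation_again} against the PL inequality and then applies Gr\"onwall's lemma to the objective gap. Set $e(t)=f(W(t))-f_*$. Three preliminary facts are needed.

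\textbf{Step 1: the trajectory stays in $\mathcal L_0$.} By the energy identity \eqref{eq:direct_flow_energy_dissipation}, $f(W(t))$ is nonincreasing, so $W(t)\in\mathcal L_0$ for all $t\ge0$. This is where the PL inequality is assumed to hold, so it may be applied at every point of the trajectory. The solution is global by Proposition~\ref{prop:direct_flow_wellposedness}.

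\textbf{Step 2: the constant $c_\epsilon$ is valid along the trajectory.} Since $\|\nabla f(W(t))\|_F\le K_G$ on $\mathcal L_0$, the lower bound in Lemma~\ref{lem:heps_properties}(v) with $K=K_G$ yields \eqref{eq:direct_flow_dissipation_again}. Coercivity makes $\mathcal L_0$ bounded, and $\nabla f$ is continuous on the closed set $\mathcal L_0$, so $K_G<\infty$.

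\textbf{Step 3: the gap is nonnegative.} The value $f_*$ is the infimum, attained at least in the sense needed for $e\ge0$; if $f_*$ is only a lower bound appearing in the PL condition, the argument is unchanged as long as $e(t)\ge0$.

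\textbf{Main computation.} Combining \eqref{eq:direct_flow_dissipation_again} with the PL inequality gives
\[
\dot e(t)\le -c_\epsilon\|\nabla f(W(t))\|_F^2\le -2\mu c_\epsilon\, e(t).
\]
Here $e$ is $C^1$, since $f$ is $C^1$ and $W$ is $C^1$. Multiplying by $e^{2\mu c_\epsilon t}$ shows that $\frac{d}{dt}\bigl(e^{2\mu c_\epsilon t}e(t)\bigr)\le0$. Integrating from $0$ to $t$ gives $e(t)\le e^{-2\mu c_\epsilon t}e(0)$, which is the claimed bound.

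\textbf{Strong convexity.} It remains to note that $\mu$-strong convexity implies the stated PL inequality. Strong convexity gives $f(Y)\ge f(W)+\langle\nabla f(W),Y-W\rangle+\frac{\mu}{2}\|Y-W\|_F^2$ for all $W,Y$. Minimizing both sides over $Y$, the right-hand side attains its minimum at $Y=W-\nabla f(W)/\mu$, which gives $f_*\ge f(W)-\frac1{2\mu}\|\nabla f(W)\|_F^2$. This is exactly the PL inequality with constant $\mu$. A strongly convex function is also coercive, so Assumption~\ref{ass:lower_coercive} holds automatically.

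None of these steps is a real obstacle. The only point needing care is justifying the uniform constant $c_\epsilon$, which is what forward invariance of $\mathcal L_0$ in Step 1 together with boundedness of the gradient there in Step 2 provides.
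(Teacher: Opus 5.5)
Your proposal is correct and follows the paper's proof: you combine the dissipation inequality $\dot e\le -c_\epsilon\|\nabla f(W(t))\|_F^2$ with the PL inequality to get $\dot e\le -2\mu c_\epsilon e$, and conclude by Gr\"onwall. The details you add (forward invariance of $\mathcal L_0$, finiteness of $K_G$, and deriving the PL inequality from $\mu$-strong convexity) are correct and fill in points the paper leaves implicit, though the sign condition in your Step 3 is not needed for the integrating-factor argument.
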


The proof is provided in Appendix~\ref{app:direct_pl}.

For the direct flow, all three conclusions follow from the fact that the
gradient signal appears explicitly in the dissipation term. The
momentum-augmented system requires an additional transfer step because
the Lyapunov identity directly dissipates the momentum state instead.

\subsection{Convergence Rate of the Momentum-Augmented Flow}

We first quantify the energy available in the momentum channel. Let
\(V_0=V(W_0,M_0)\) and
\(K_M=\sup_{t\ge0}\|M(t)\|_F\).

\begin{lemma}[Momentum-augmented trajectory bounds]
	\label{thm:momentum_flow_trajectory_bounds}
	Under Assumptions~\ref{ass:local_gradient_lipschitz} and
	\ref{ass:lower_coercive}, every solution $(W(t),M(t))$ of
	\eqref{eq:momentum_augmented_flow} satisfies
	\[
	\int_0^\infty\|M(t)\|_F^2\,dt
	\le
	\frac{\sqrt{K_M^2+\epsilon}}{b}V_0,
	\]
	and
	\[
	\int_0^\infty\|\dot W(t)\|_F^2\,dt
	\le
	\frac{V_0}{b\sqrt{\epsilon}}.
	\]
\end{lemma}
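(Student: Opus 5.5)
The proof integrates the exact storage identity \eqref{eq:momentum_storage_dissipation} over $[0,T]$ and then applies the two sides of the sector bound \eqref{eq:heps_dissipation_bounds}. By Theorem~\ref{thm:momentum_flow_convergence}, every solution of \eqref{eq:momentum_augmented_flow} is global and bounded, so $K_M=\sup_{t\ge0}\|M(t)\|_F<\infty$. Both $a(f(W)-f_{\inf})\ge0$ and $\Phi_\epsilon(M)\ge0$ hold: the latter because each singular mode contributes $\sqrt{\sigma_i^2+\epsilon}-\sqrt{\epsilon}\ge0$. Hence $V\ge0$ along the trajectory. Integrating \eqref{eq:momentum_storage_dissipation} from $0$ to $T$ gives
\[
b\int_0^T\langle h_\epsilon(M(t)),M(t)\rangle\,dt = V_0 - V(W(T),M(T)) \le V_0 .
\]

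For the first estimate, apply the lower bound in Lemma~\ref{lem:heps_properties}(v) with $K=K_M$, which is valid since $\|M(t)\|_F\le K_M$ for all $t$:
\[
\langle h_\epsilon(M),M\rangle\ge \frac{\|M\|_F^2}{\sqrt{K_M^2+\epsilon}} .
\]
Substituting into the integrated identity gives $\int_0^T\|M\|_F^2\,dt\le \sqrt{K_M^2+\epsilon}\,V_0/b$. Letting $T\to\infty$ by monotone convergence yields the first claim.

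For the second estimate, note that $\dot W=-h_\epsilon(M)$, so we need to bound $\|h_\epsilon(M)\|_F^2$ by a multiple of $\langle h_\epsilon(M),M\rangle$. Write $M=U\operatorname{diag}(\sigma_i)V^\top$. Then
\[
\|h_\epsilon(M)\|_F^2=\sum_i \frac{\sigma_i^2}{\sigma_i^2+\epsilon}
\quad\text{and}\quad
\langle h_\epsilon(M),M\rangle=\sum_i\frac{\sigma_i^2}{\sqrt{\sigma_i^2+\epsilon}} .
\]
Termwise, $\frac{\sigma_i^2}{\sigma_i^2+\epsilon}=\frac{1}{\sqrt{\sigma_i^2+\epsilon}}\cdot\frac{\sigma_i^2}{\sqrt{\sigma_i^2+\epsilon}}\le \frac{1}{\sqrt\epsilon}\,\frac{\sigma_i^2}{\sqrt{\sigma_i^2+\epsilon}}$. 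Therefore
\[
\|\dot W\|_F^2\le \epsilon^{-1/2}\langle h_\epsilon(M),M\rangle ,
\]
and integrating gives $\int_0^\infty\|\dot W\|_F^2\,dt\le V_0/(b\sqrt\epsilon)$.

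The computations are routine. The main point requiring care is that $K_M$ is finite, and this rests entirely on the boundedness statement of Theorem~\ref{thm:momentum_flow_convergence}. If one wanted a self-contained argument, boundedness of $M$ follows from $\Phi_\epsilon(M(t))\le V_0$ together with $\Phi_\epsilon(M)\ge\|M\|_*-d\sqrt\epsilon$. The latter holds because $\sqrt{\sigma^2+\epsilon}\ge\sigma$ for each singular value $\sigma$. It gives the explicit bound $K_M\le\|M\|_*\le V_0+d\sqrt\epsilon$.
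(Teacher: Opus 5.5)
Your proposal is correct and follows the paper's proof essentially step for step: you integrate the storage identity $\dot V=-b\langle h_\epsilon(M),M\rangle$ using $V\ge0$, apply the lower sector bound of Lemma~\ref{lem:heps_properties}(v) with $K=K_M$ for the first estimate, and use the termwise inequality $\|h_\epsilon(M)\|_F^2\le\epsilon^{-1/2}\langle h_\epsilon(M),M\rangle$ for the second. Your explicit bound $K_M\le\sup_{t\ge0}\|M(t)\|_*\le V_0+d\sqrt{\epsilon}$ is a harmless addition; it makes quantitative the finiteness of $K_M$, which the paper simply takes from the boundedness in Theorem~\ref{thm:momentum_flow_convergence}.
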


The proof is provided in Appendix~\ref{app:momentum_trajectory_bounds}.

The momentum estimate alone does not directly bound the gradient signal.
The cross quantity below couples the gradient and momentum states and
transfers the integral control of \(M\) to a stationarity estimate.
Define \(Q(W,M)=\langle\nabla f(W),M\rangle\) and
\(\Delta_Q=\sup_{t\ge0}Q(W(t),M(t))-Q(W_0,M_0)\). On a bounded set
\(\mathcal K\) containing the trajectory, let
\(C_M=b^2/(2a)+L_{\mathcal K}/\sqrt{\epsilon}\).

\begin{theorem}[Momentum-augmented stationarity rate]
	\label{thm:momentum_flow_nonconvex_rate}
	Under Assumptions~\ref{ass:local_gradient_lipschitz} and
	\ref{ass:lower_coercive}, for all $T>0$, we have
	\[
	\begin{aligned}
	\min_{0\le t\le T}\|\nabla f(W(t))\|_F^2
	&\le
	\frac{2\Delta_Q}{aT}+
	\frac{2C_M}{aT}
	\frac{\sqrt{K_M^2+\epsilon}}{b}V_0.
	\end{aligned}
	\]
\end{theorem}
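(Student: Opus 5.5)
The plan is to turn the integral control of $M$ from Lemma~\ref{thm:momentum_flow_trajectory_bounds} into integral control of $G(t)=\nabla f(W(t))$. The tool is the cross quantity $Q(t)=Q(W(t),M(t))=\langle G(t),M(t)\rangle$. Its time derivative contains the term $a\|G\|_F^2$ coming from the $M$-equation; the remaining terms can be absorbed into $\|M\|_F^2$.

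\emph{Step 1: regularity of $Q$.} By Theorem~\ref{thm:momentum_flow_convergence} the trajectory is global and bounded, so it stays in a bounded set $\mathcal K$. On $\mathcal K$, $\nabla f$ is Lipschitz with constant $L_{\mathcal K}$ by Assumption~\ref{ass:local_gradient_lipschitz}. Since $f$ is only $C^1$, $G(t)$ need not be differentiable, and I will not use a Hessian. Instead I will argue as follows:
\begin{itemize}
\item $W$ is $C^1$, so $t\mapsto G(t)$ is locally Lipschitz and hence differentiable a.e.
\item At such points, $\|\dot G(t)\|_F\le L_{\mathcal K}\|\dot W(t)\|_F=L_{\mathcal K}\|h_\epsilon(M(t))\|_F$.
\item From the SVD form \eqref{eq:h_epsilon_svd}, $\sigma/\sqrt{\sigma^2+\epsilon}\le\sigma/\sqrt\epsilon$, so $\|h_\epsilon(M)\|_F\le\|M\|_F/\sqrt\epsilon$. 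This also follows from \eqref{eq:heps_lipschitz} with $N=0$.
\end{itemize}
Therefore $Q$ is absolutely continuous on $[0,T]$, and for a.e.\ $t$,
\[
\dot Q=\langle \dot G,M\rangle+\langle G,aG-bM\rangle
= a\|G\|_F^2+\langle \dot G,M\rangle-b\langle G,M\rangle .
\]
I expect this step to be the main technical point, because $f$ is not assumed $C^2$. The a.e.\ differentiation and the fundamental theorem of calculus for absolutely continuous functions handle it.

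\emph{Step 2: integrate and absorb.} Integrating over $[0,T]$ gives
\[
a\int_0^T\|G\|_F^2\,dt = Q(T)-Q(0)-\int_0^T\langle\dot G,M\rangle\,dt+b\int_0^T\langle G,M\rangle\,dt .
\]
I bound the four terms on the right as follows:
\begin{itemize}
\item $Q(T)-Q(0)\le\Delta_Q$, by the definition of $\Delta_Q$.
\item $|\langle\dot G,M\rangle|\le (L_{\mathcal K}/\sqrt\epsilon)\|M\|_F^2$, by Step 1.
\item $b\langle G,M\rangle\le \tfrac a2\|G\|_F^2+\tfrac{b^2}{2a}\|M\|_F^2$, by Young's inequality.
\end{itemize}
Moving the $\tfrac a2\|G\|_F^2$ term to the left yields
\[
\frac a2\int_0^T\|G\|_F^2\,dt\le \Delta_Q + C_M\int_0^T\|M\|_F^2\,dt,
\qquad C_M=\frac{b^2}{2a}+\frac{L_{\mathcal K}}{\sqrt\epsilon}.
\]

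\emph{Step 3: conclude.} Lemma~\ref{thm:momentum_flow_trajectory_bounds} gives $\int_0^\infty\|M\|_F^2\,dt\le \sqrt{K_M^2+\epsilon}\,V_0/b$. Here $K_M<\infty$ by boundedness. Substituting this into Step 2 and multiplying by $2/a$ bounds $\int_0^T\|G\|_F^2\,dt$. Finally, $T\min_{0\le t\le T}\|G(t)\|_F^2\le\int_0^T\|G\|_F^2\,dt$, where the minimum is attained by continuity. Dividing by $T$ gives exactly the stated inequality.
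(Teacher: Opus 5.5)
Your proposal is correct and follows the paper's proof essentially step for step. Both arguments differentiate the cross quantity $Q=\langle\nabla f(W),M\rangle$ almost everywhere, use $\|\tfrac{d}{dt}\nabla f(W(t))\|_F\le (L_{\mathcal K}/\sqrt{\epsilon})\|M(t)\|_F$, absorb $b\langle\nabla f(W),M\rangle$ via Young's inequality with parameter $a/2$, and conclude with Lemma~\ref{thm:momentum_flow_trajectory_bounds} and the bound of the minimum by the time average.
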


The proof is provided in Appendix~\ref{app:momentum_nonconvex}.

For a convex objective, the momentum equation also relates the integrated
gradient signal to the accumulated objective gap. Combining this relation
with the momentum-energy bound and Jensen's inequality leads naturally to
an averaged state. Define the Ces\`aro average
\(\bar W_T=T^{-1}\int_0^T W(t)\,dt\).

\begin{theorem}[Momentum-augmented convex average rate]
	\label{thm:momentum_flow_convex_rate}
	Suppose that Assumptions~\ref{ass:local_gradient_lipschitz} and
	\ref{ass:lower_coercive} hold, $f$ is convex, and
	$\mathcal W^*\ne\emptyset$. There exists $C_{\rm mom}>0$, independent
	of $T$, such that
	\[
	f(\bar W_T)-f_*
	\le
	\frac{C_{\rm mom}}{\sqrt T},
	\qquad T\ge1.
	\]
	Equivalently, $f(\bar W_T)-f_*=\mathcal O(T^{-1/2})$.
\end{theorem}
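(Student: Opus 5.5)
The argument starts from the momentum equation integrated in time. Since $\dot M=a\nabla f(W)-bM$, we have
\[
a\int_0^T\nabla f(W(t))\,dt=M(T)-M_0+b\int_0^T M(t)\,dt .
\]
Fix $W^*\in\mathcal W^*$. By convexity, $f(W(t))-f_*\le\langle\nabla f(W(t)),W(t)-W^*\rangle$. Integrating over $[0,T]$ and applying Jensen's inequality to the Ces\`aro average gives
\[
T\bigl(f(\bar W_T)-f_*\bigr)\le\int_0^T\langle\nabla f(W(t)),W(t)-W^*\rangle\,dt .
\]
It remains to show that the right-hand side is $\mathcal O(\sqrt T)$.

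To do this, substitute $a\nabla f(W)=\dot M+bM$ into the right-hand side. This splits it into
\[
\frac1a\int_0^T\langle\dot M,W-W^*\rangle\,dt+\frac ba\int_0^T\langle M,W-W^*\rangle\,dt .
\]
For the first term, integrate by parts:
\[
\int_0^T\langle\dot M,W-W^*\rangle\,dt=\langle M(T),W(T)-W^*\rangle-\langle M_0,W_0-W^*\rangle+\int_0^T\langle M,h_\epsilon(M)\rangle\,dt .
\]
By Theorem~\ref{thm:momentum_flow_convergence}, the trajectory is bounded; let $R:=\sup_t\|W(t)-W^*\|_F$ and $K_M:=\sup_t\|M(t)\|_F$. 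The boundary terms are then bounded by $2K_M R$. The remaining integral equals at most $V_0/b$, by integrating the exact dissipation identity \eqref{eq:momentum_storage_dissipation} and using $V\ge0$. Hence the first term is $\mathcal O(1)$, uniformly in $T$.

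The second term carries the $\sqrt T$ loss. By Cauchy--Schwarz in time together with Lemma~\ref{thm:momentum_flow_trajectory_bounds},
\[
\int_0^T|\langle M,W-W^*\rangle|\,dt\le R\sqrt T\Bigl(\int_0^\infty\|M\|_F^2\,dt\Bigr)^{1/2}\le R\sqrt T\Bigl(\frac{\sqrt{K_M^2+\epsilon}}{b}V_0\Bigr)^{1/2}.
\]
Collecting both pieces and dividing by $T$ gives
\[
f(\bar W_T)-f_*\le\frac{C_1}{T}+\frac{C_2}{\sqrt T}.
\]
For $T\ge1$ this yields the claim with $C_{\rm mom}=C_1+C_2$. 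Both constants depend only on $a,b,\epsilon,V_0,R,K_M$ and not on $T$.

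The main obstacle is the justification that $R$ and $K_M$ are finite and independent of $T$. For this I would not rely on Theorem~\ref{thm:momentum_flow_convergence} as a black box, but argue directly from the Lyapunov function:
\begin{itemize}
\item Since $V(W(t),M(t))\le V_0$, we get $a(f(W)-f_{\inf})\le V_0$, which confines $W$ to a sublevel set of $f$; this set is bounded by coercivity.
\item We also get $\Phi_\epsilon(M)\le V_0$. Because $\Phi_\epsilon(M)\ge\sum_i(\sigma_i-\sqrt\epsilon)\ge\|M\|_F-d\sqrt\epsilon$, this bounds $M$ as well.
\end{itemize}
A minor point is that Jensen's inequality needs $\bar W_T$ to be well defined; this holds by continuity of $W$.
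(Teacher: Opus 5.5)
Your proposal is correct and follows the paper's proof essentially step by step. It combines convexity with the substitution $a\nabla f=\dot M+bM$, integrates by parts (boundary term at most $2K_M R$, dissipation integral at most $V_0/b$), controls the $\sqrt T$ term by Cauchy--Schwarz and Lemma~\ref{thm:momentum_flow_trajectory_bounds}, and closes with Jensen on the Ces\`aro average; your direct Lyapunov derivation of the bounds on $W$ and $M$ is the same argument the paper uses in proving Theorem~\ref{thm:momentum_flow_convergence}.
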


The proof is provided in Appendix~\ref{app:momentum_convex}. These
results show how momentum-storage dissipation is transferred to
velocity, stationarity, and convergence-rate estimates.

\begin{remark}[Interpretation of global rate bounds]
\label{rem:global-rate-interpretation}
The bounds in this section are obtained through
trajectory-wise norm bounds and sector inequalities for
$h_\epsilon$. In this process, the detailed spectral information
of the gradient or momentum matrix is compressed into scalar
quantities such as $K_G$, $K_M$, and $\epsilon$. In particular,
the bounds do not retain the singular-value distribution, the
singular-vector orientation, or their interaction with the local
curvature of the objective. Consequently, the resulting
asymptotic orders alone do not exhibit a systematic global
advantage of the spectral flow over the standard
Frobenius-gradient flow. They should therefore be interpreted
primarily as stability and convergence certificates, rather than
as a global complexity comparison between the two directions.

This observation suggests that any benefit of spectral
normalization is unlikely to be universal. Instead, it may arise
only in regions where the singular-value structure and singular-vector
alignment of the gradient interact favorably with the directional
curvature induced by the Hessian. To expose this
mechanism, the next section moves beyond global norm estimates
and compares the two update directions through a local model.
\end{remark}

\section{Local Descent-Rate Comparison}
\label{sec:local_descent_comparison}

\subsection{General Local Advantage Criterion}
\label{subsec:general_local_comparison}
The global results do not explain why replacing Frobenius-gradient
feedback with spectral gradient feedback may improve optimization performance.
To better understand this mechanism, we study
the optimization process locally. Specifically, at a fixed point \(W\), we
compare the objective descent rates generated by the
Frobenius-gradient flow and the spectral gradient flow. This
comparison provides a local criterion for identifying when spectral
normalization yields a larger instantaneous decrease of the objective.

Let $f$ be $C^2$ near $W$. We denote
$G=\nabla f(W)$ and the Hessian operator by
\[
\mathcal H_W[D]
=
\left.
\frac{d}{ds}\nabla f(W+sD)
\right|_{s=0}.
\]

\begin{assumption}[Admissible comparison directions]
\label{ass:local_direction}

Let \(D\in\mathbb R^{m\times n}\) be a nonzero direction at \(W\).
We assume that
\[
\langle G,D\rangle>0,
\qquad
\langle D,\mathcal H_W[D]\rangle>0.
\]

\end{assumption}

A direct comparison of the two unscaled flows, $\dot{W}=-G$ and
$\dot{W}=-h_{\epsilon}(G)$, is affected by the arbitrary
choice of time scale.
To compare their intrinsic local descent
efficiencies, we assign each direction a direction-dependent factor $\alpha$ that maximizes the decrease predicted by the same local
quadratic model. This normalization removes the arbitrary scaling of the update
directions and isolates their interaction with the directional curvature
induced by the local Hessian.

For an update direction \(D\), the second-order expansion along
\(W-\alpha D\) is
\[
f(W-\alpha D)
=
f(W)
-\alpha\langle G,D\rangle
+\frac{\alpha^2}{2}
\langle D,\mathcal H_W[D]\rangle
+o(\alpha^2).
\]
Ignoring the higher-order term, we define the local quadratic model
\(q_D(\alpha)=f(W)-\alpha\langle G,D\rangle+\frac{\alpha^2}{2}
\langle D,\mathcal H_W[D]\rangle\).

Under Assumption~\ref{ass:local_direction}, \(q_D\) is strictly convex
and the unique minimizer of \(q_D\) over \(\alpha>0\) is
\[\alpha_D^\ast=\frac{\langle G,D\rangle}{\langle D,\mathcal H_W[D]\rangle}.\]
We then use \(\alpha_D^\ast\) to calibrate the local time scale of the
continuous flow and consider
\[
    \dot W=-\alpha_D^\ast D.
\]
This choice has a direct finite-step interpretation. The
unit-time Euler step associated with this calibrated flow attains the
largest objective decrease predicted by the quadratic model.

Under the local model, the resulting objective descent rate at \(W\) is
\[
R_W(D)
:=
-\frac{d}{dt}f(W(t))
=
-\langle G,\dot W_D\rangle
=
\frac{\langle G,D\rangle^2}
{\langle D,\mathcal H_W[D]\rangle}.
\]

For the Frobenius gradient flow $\dot{W}=-G$, the update direction is
$D=G$; for the smoothed spectral gradient flow
$\dot{W}=-h_{\epsilon}(G)$, the update direction is
$D=h_{\epsilon}(G)$. Substituting
these directions into the same time-scale calibration gives,
respectively,
\begin{align}
	R_F(W)
	=
	\frac{\|G\|_F^4}
	{\langle G,\mathcal H_W[G]\rangle},
	\label{eq:RF_definition}
\end{align}
and
\begin{align}
	R_\epsilon(W)
	=
	\frac{\langle G,h_\epsilon(G)\rangle^2}
	{\left\langle
	h_\epsilon(G),\mathcal H_W[h_\epsilon(G)]
	\right\rangle}.
	\label{eq:Repsilon_definition}
\end{align}

The local descent efficiencies of the two gradient flows are compared
through $R_F(W)$ and $R_{\epsilon}(W)$. If
$R_F(W)>R_{\epsilon}(W)$, the Frobenius gradient flow descends faster;
the reverse inequality favors the smoothed spectral gradient flow.

Define the local advantage ratio
\begin{align}
	\Gamma_\epsilon(W)
	=
	\frac{R_\epsilon(W)}{R_F(W)}.
	\label{eq:Gamma_epsilon_definition}
\end{align}
The comparison threshold for the two local descent efficiencies is one.
This observation gives the following result.

\begin{theorem}[General local spectral advantage criterion]
\label{thm:local_advantage_condition}
Let \(f\) be \(C^2\) near \(W\), let \(G=\nabla f(W)\neq0\), and assume
that the directions \(G\) and \(h_\epsilon(G)\) satisfy
Assumption~\ref{ass:local_direction}. Then the local
descent-rate ratio is
\[
\Gamma_\epsilon(W)
=
\frac{
\langle G,h_\epsilon(G)\rangle^2
\langle G,\mathcal H_W[G]\rangle
}{
\|G\|_F^4
\left\langle
h_\epsilon(G),\mathcal H_W[h_\epsilon(G)]
\right\rangle
}.
\]
Under the calibrated time scales, the smoothed
spectral-gradient direction has a strictly larger instantaneous objective
descent rate than the Frobenius-gradient direction if and only if
\(\Gamma_\epsilon(W)>1\).
\end{theorem}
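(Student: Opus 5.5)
The argument is essentially algebraic. We only need to confirm that the two calibrated descent rates are well defined and positive, and then compare them.

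\emph{Step 1: the rates are well defined.} Since \(G\neq0\), we have \(\langle G,G\rangle=\|G\|_F^2>0\). Assumption~\ref{ass:local_direction} for \(D=G\) adds \(\langle G,\mathcal H_W[G]\rangle>0\). Hence \(\alpha_G^\ast\) is a well-defined positive minimizer of \(q_G\), and \(R_F(W)\) in \eqref{eq:RF_definition} is a finite positive number.

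For \(D=h_\epsilon(G)\), note that \(h_\epsilon(G)\neq0\), since its singular values are \(\gamma_i/\sqrt{\gamma_i^2+\epsilon}\), which are not all zero. The lower bound in Lemma~\ref{lem:heps_properties}(v), taking \(K=\|G\|_F\), gives \(\langle G,h_\epsilon(G)\rangle\ge \|G\|_F^2/\sqrt{\|G\|_F^2+\epsilon}>0\). Together with the assumed \(\langle h_\epsilon(G),\mathcal H_W[h_\epsilon(G)]\rangle>0\), this shows that \(R_\epsilon(W)\) in \eqref{eq:Repsilon_definition} is finite and positive.

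It is worth recording that each \(R_W(D)\) is obtained by substituting \(\dot W=-\alpha_D^\ast D\) into \(-\langle G,\dot W\rangle\). This gives \(\alpha_D^\ast\langle G,D\rangle=\langle G,D\rangle^2/\langle D,\mathcal H_W[D]\rangle\), which matches the stated formula.

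\emph{Step 2: the ratio.} Dividing \eqref{eq:Repsilon_definition} by \eqref{eq:RF_definition} and clearing the compound fraction yields the displayed expression for \(\Gamma_\epsilon(W)\) in \eqref{eq:Gamma_epsilon_definition}. This is legitimate because every factor involved is nonzero by Step 1.

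\emph{Step 3: the equivalence.} Since \(R_F(W)>0\), multiplying \(\Gamma_\epsilon(W)>1\) by \(R_F(W)\) preserves the inequality. Hence \(\Gamma_\epsilon(W)>1\) holds if and only if \(R_\epsilon(W)>R_F(W)\). This is precisely the statement that the calibrated spectral direction has a strictly larger instantaneous descent rate.

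The only point requiring care is the positivity in Step 1, in particular \(\langle G,h_\epsilon(G)\rangle>0\) and \(R_F(W)>0\). This is what makes the division valid and lets the inequality pass through without flipping. It follows directly from Lemma~\ref{lem:heps_properties}(v) and the assumption, so there is no real obstacle.
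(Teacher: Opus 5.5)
Your proposal is correct and follows the paper's argument: compute $R_W(D)=\alpha_D^\ast\langle G,D\rangle=\langle G,D\rangle^2/\langle D,\mathcal H_W[D]\rangle$ for $D=G$ and $D=h_\epsilon(G)$, take the ratio, and use $R_F(W)>0$ to conclude that $R_\epsilon(W)>R_F(W)$ if and only if $\Gamma_\epsilon(W)>1$. Your explicit positivity check is a harmless addition; note that $\langle G,h_\epsilon(G)\rangle>0$ is already part of Assumption~\ref{ass:local_direction}, so invoking Lemma~\ref{lem:heps_properties}(v) is redundant.
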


The proof is provided in Appendix~\ref{app:local_advantage}.

\begin{remark}[Interpretation of the criterion]
Theorem~\ref{thm:local_advantage_condition} provides a local comparison
between the Frobenius-gradient and smoothed spectral-gradient directions
under a common time-scale calibration. The quantity
\(\Gamma_\epsilon(W)\) is the ratio of the resulting instantaneous
objective descent rates. Since it depends on both the gradient and the
local Hessian, the local advantage of the spectral direction is jointly
determined by first-order alignment and directional curvature. For
objectives with additional structure, the criterion can be simplified
further. The quadratic regression case is analyzed in
Subsection~\ref{subsec:quadratic_mlp}.
\end{remark}

\subsection{Quadratic Loss for a Neural-Network Layer}
\label{subsec:quadratic_mlp}

Many optimization problems arising in neural-network training involve
matrix-valued parameters and data representations. We consider one of the
simplest such problems: a linear regression problem associated with a
single network layer.
Consider a weight matrix
$W\in\mathbb{R}^{m\times n}$. Let
$A\in\mathbb{R}^{n\times N}$ denote the current matrix of input
features or activations for this layer, and let \(Y\) be the desired
output. Conditional on the
remaining network variables, consider the layerwise squared-loss
model
\begin{equation}
    f(W)
    =
    \frac{1}{2}\|WA-Y\|_F^2,
    \label{eq:fixed_activation_quadratic_loss}
\end{equation}
where $Y\in\mathbb{R}^{m\times N}$ is the corresponding target
matrix.

The matrix $A$ is treated as given when differentiating with
respect to $W$. Hence, the analysis below is
pointwise: at each training state, the criterion may be evaluated
using the current activation matrix.

For \eqref{eq:fixed_activation_quadratic_loss}, the gradient is
\begin{equation}
    G
    =
    \nabla f(W)
    =
    (WA-Y)A^\top,
    \label{eq:quadratic_gradient}
\end{equation}
and the Hessian action along a direction \(D\) is
\begin{equation}
    \mathcal H_W[D]
    =
    DAA^\top.
    \label{eq:quadratic_hessian}
\end{equation}
In particular, the Hessian is independent of \(W\). Moreover,
\begin{equation}
    \langle D,\mathcal H_W[D]\rangle
    =
    \operatorname{Tr}(D^\top DAA^\top)
    =
    \|DA\|_F^2.
    \label{eq:quadratic_directional_curvature}
\end{equation}
Therefore, the general local rate associated with an admissible direction
\(D\) reduces to
\begin{equation}
    R_W(D)
    =
    \frac{\langle G,D\rangle^2}
    {\|DA\|_F^2}.
    \label{eq:quadratic_general_rate}
\end{equation}

Taking \(D=G\) and \(D=h_\epsilon(G)\), respectively, gives
\begin{align}
    R_F(W)
    &=
    \frac{\|G\|_F^4}
    {\langle G^\top G,AA^\top\rangle},
    \label{eq:quadratic_RF}
    \\
    R_\epsilon(W)
    &=
    \frac{\langle G,h_\epsilon(G)\rangle^2}
    {\left\langle
    h_\epsilon(G)^\top h_\epsilon(G),AA^\top
    \right\rangle}.
    \label{eq:quadratic_Repsilon}
\end{align}
These expressions separate the contribution of the gradient matrix from
the curvature induced by the feature matrix.

To obtain a particularly transparent criterion, we consider the square
full-rank case and the matrix-polar limit. Let
\(W,A,Y,G\in\mathbb R^{d\times d}\) and assume that \(G\) is nonsingular.

As \(\epsilon\to 0\),
\[
h_\epsilon(G)\longrightarrow \operatorname{Polar}(G).
\]

Moreover,
\[
\langle G,h_\epsilon(G)\rangle
\longrightarrow
\langle G,\operatorname{Polar}(G)\rangle
=
\|G\|_*,
\]
and, since
\(\operatorname{Polar}(G)^\top\operatorname{Polar}(G)=I\),
\[
\begin{aligned}
\left\langle
h_\epsilon(G),
\mathcal H_W[h_\epsilon(G)]
\right\rangle
&=
\left\|h_\epsilon(G)A\right\|_F^2
\\
&\longrightarrow
\|\operatorname{Polar}(G)A\|_F^2
=
\|A\|_F^2.
\end{aligned}
\]
Therefore, the limiting matrix-polar rate is
\begin{equation}
    R_0(W)
    :=
    \lim_{\epsilon \to 0}R_\epsilon(W)
    =
    \frac{\|G\|_*^2}{\|A\|_F^2}.
    \label{eq:quadratic_R0}
\end{equation}
Combining this expression with
\begin{equation}
    R_F(W)
    =
    \frac{\|G\|_F^4}
    {\langle G^\top G,AA^\top\rangle},
    \label{eq:quadratic_RF_limit}
\end{equation}
gives
\begin{equation}
\begin{aligned}
    \Gamma_0(W)
    &:=
    \lim_{\epsilon \to 0}\Gamma_\epsilon(W)
    =
    \frac{R_0(W)}{R_F(W)}
    \\
    &=
    \frac{\|G\|_*^2}{\|G\|_F^2}
    \frac{\langle G^\top G,AA^\top\rangle}
    {\|G\|_F^2\|A\|_F^2}.
\end{aligned}
\label{eq:quadratic_gamma_limit}
\end{equation}

We introduce the effective rank of the gradient matrix
\begin{equation}
    r_{\mathrm{eff}}(G)
    :=
    \frac{\|G\|_*^2}{\|G\|_F^2}
    =
    \frac{\left(\sum_{i=1}^{d}\sigma_i(G)\right)^2}
    {\sum_{i=1}^{d}\sigma_i(G)^2}.
    \label{eq:gradient_effective_rank}
\end{equation}
We also define the normalized gradient and activation covariance
matrices
\begin{equation}
    \rho_G
    :=
    \frac{G^\top G}{\|G\|_F^2},
    \qquad
    \rho_A
    :=
    \frac{AA^\top}{\|A\|_F^2}.
    \label{eq:normalized_covariances}
\end{equation}
We measure the similarity between $\rho_G$ and $\rho_A$ using the
Frobenius inner product:
\begin{equation}
    \operatorname{sim}_{\mathrm{HS}}(\rho_G,\rho_A)
    :=
    \langle\rho_G,\rho_A\rangle
    =
    \operatorname{Tr}(\rho_G^\top\rho_A).
    \label{eq:HS_covariance_similarity}
\end{equation}
With these definitions, the limiting advantage ratio becomes
\begin{equation}
    \Gamma_0(W)
    =
    r_{\mathrm{eff}}(G)
    \operatorname{sim}_{\mathrm{HS}}(\rho_G,\rho_A).
    \label{eq:quadratic_gamma_factorization}
\end{equation}

\begin{corollary}[Advantage criterion for the quadratic loss]
\label{cor:quadratic_local_advantage}
Consider the layerwise quadratic model
\eqref{eq:fixed_activation_quadratic_loss}. Suppose that
\(W,A,Y\in\mathbb R^{d\times d}\), that
\(G=\nabla f(W)\) is nonsingular, and that the Frobenius-gradient and
matrix-polar directions satisfy
Assumption~\ref{ass:local_direction}. Then the matrix-polar spectral
gradient flow has a strictly larger locally optimal descent rate than the
Frobenius-gradient flow if and only if
\begin{align}
    r_{\mathrm{eff}}(G)
    \operatorname{sim}_{\mathrm{HS}}(\rho_G,\rho_A)
    >
    1.
    \label{eq:quadratic_advantage_condition}
\end{align}
Equivalently,
\begin{align}
    \operatorname{sim}_{\mathrm{HS}}(\rho_G,\rho_A)
    >
    \frac{1}{r_{\mathrm{eff}}(G)}.
    \label{eq:quadratic_similarity_threshold}
\end{align}
\end{corollary}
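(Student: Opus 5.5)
The plan is to specialize Theorem~\ref{thm:local_advantage_condition} to the quadratic model~\eqref{eq:fixed_activation_quadratic_loss} with the direction $D=\operatorname{Polar}(G)$, the $\epsilon\to0$ limit of $h_\epsilon(G)$. The argument of Theorem~\ref{thm:local_advantage_condition} does not depend on $D$ having the form $h_\epsilon(G)$. It only uses that each direction is admissible in the sense of Assumption~\ref{ass:local_direction} and that the calibrated rate is $R_W(D)=\langle G,D\rangle^2/\langle D,\mathcal H_W[D]\rangle$. Hence for the two directions $G$ and $P:=\operatorname{Polar}(G)$, both assumed admissible, the polar direction has a strictly larger calibrated descent rate if and only if $R_W(P)/R_W(G)>1$. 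Both rates are strictly positive, because the curvatures in their denominators are positive by assumption.

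The next step is to evaluate each rate using~\eqref{eq:quadratic_general_rate}. Since $G$ is square and nonsingular, it has an SVD $G=U\Sigma V^\top$ with $\Sigma$ positive definite, and $P=UV^\top$ is orthogonal. Then
\[
\langle G,P\rangle=\operatorname{Tr}(V\Sigma U^\top UV^\top)=\operatorname{Tr}(\Sigma)=\|G\|_*,
\]
and
\[
\|PA\|_F^2=\operatorname{Tr}(A^\top P^\top PA)=\|A\|_F^2 .
\]
This gives $R_W(P)=\|G\|_*^2/\|A\|_F^2$, which is the quantity $R_0(W)$ in~\eqref{eq:quadratic_R0}. For the Frobenius direction we have $R_F(W)=\|G\|_F^4/\langle G^\top G,AA^\top\rangle$. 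The denominator $\langle G^\top G,AA^\top\rangle$ equals $\|GA\|_F^2$, which is positive by admissibility. In particular $A\neq0$, so $\|A\|_F>0$ and the normalization defining $\rho_A$ in~\eqref{eq:normalized_covariances} is legitimate.

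Dividing the two rates gives~\eqref{eq:quadratic_gamma_limit}. Substituting~\eqref{eq:gradient_effective_rank} and~\eqref{eq:normalized_covariances} then yields the factorization
\[
\frac{\|G\|_*^2}{\|G\|_F^2}\cdot\frac{\langle G^\top G,AA^\top\rangle}{\|G\|_F^2\|A\|_F^2}
=r_{\mathrm{eff}}(G)\,\langle\rho_G,\rho_A\rangle .
\]
This identity is pure algebra. The rate comparison is therefore equivalent to~\eqref{eq:quadratic_advantage_condition}. Because $r_{\mathrm{eff}}(G)\ge1>0$, dividing by it gives the equivalent form~\eqref{eq:quadratic_similarity_threshold}.

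The main obstacle is interpretive rather than computational. The corollary refers to the matrix-polar flow, while Theorem~\ref{thm:local_advantage_condition} is stated for $h_\epsilon(G)$. I would handle this by applying the general rate formula directly to $D=P$, which is valid since admissibility of $P$ is assumed. As a consistency check, I would also note that $R_\epsilon(W)\to R_0(W)$ and $\Gamma_\epsilon(W)\to\Gamma_0(W)$. This follows from the continuity of $h_\epsilon(G)$ in $\epsilon$ at a nonsingular $G$, by~\eqref{eq:uniform-feedback-approximation}, together with positivity of the limiting denominator $\|A\|_F^2$. The strict inequality in the corollary concerns the limiting direction itself, so no limit interchange is needed for the stated equivalence.
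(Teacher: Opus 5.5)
Your proposal is correct and essentially matches the paper's derivation preceding the corollary: the same identities $\langle G,\operatorname{Polar}(G)\rangle=\|G\|_*$, $\|\operatorname{Polar}(G)A\|_F^2=\|A\|_F^2$, and $\langle G^\top G,AA^\top\rangle=\|GA\|_F^2$ give $\Gamma_0(W)=r_{\mathrm{eff}}(G)\operatorname{sim}_{\mathrm{HS}}(\rho_G,\rho_A)$, and $r_{\mathrm{eff}}(G)\ge1$ yields the threshold form. The one difference is that you evaluate $R_W$ directly at $D=\operatorname{Polar}(G)$, whereas the paper obtains $R_0(W)$ as the limit of $R_\epsilon(W)$ as $\epsilon\to0$; your version fits the corollary's hypothesis more directly, and you correctly reconcile it with the paper's limit using the uniform feedback-approximation bound.
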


\begin{remark}[Interpretation of the quadratic criterion]
\label{rem:quadratic_criterion_interpretation}

The effective rank
\(r_{\mathrm{eff}}(G)\)
reflects the degree of isotropy in the singular-value spectrum of the gradient
matrix. A larger effective rank indicates that the singular values of
\(G\) are more evenly distributed, whereas a smaller effective rank
indicates that the gradient is dominated by a small number of singular
modes.

\(\operatorname{sim}_{\mathrm{HS}}(\rho_G,\rho_A)\)
measures the similarity between the normalized gradient covariance
\(G^\top G\) and the normalized activation covariance \(AA^\top\).
A larger value means that the gradient and activation matrices distribute
their energy along similar directions in the activation space, whereas a
smaller value indicates that their main energy directions are different.

Corollary~\ref{cor:quadratic_local_advantage} therefore shows that the
local advantage of the matrix-polar spectral gradient flow is determined
by two factors: the uniformity of the gradient singular-value spectrum
and the similarity between the gradient and activation covariance
structures. In particular, the spectral gradient flow is more likely to
achieve a larger local descent rate when the singular values of \(G\) are
relatively well distributed and the gradient covariance is sufficiently
similar to the activation covariance. 

The numerical experiments later vary the spectral anisotropy and relative
orientation of the activation matrix to illustrate how these two factors
affect the local advantage ratio.
\end{remark}

\section{Numerical Experiments}
\label{sec:numerical_experiments}

We present three numerical experiments that illustrate the theoretical
results developed above. The first assesses the modeling relevance of
the proposed continuous-time systems by comparing their trajectories with
those of discrete exact-polar spectral algorithms. The second isolates the effects of gradient effective rank
and gradient--activation covariance similarity in the simplified
neural-network model. The third compares the observed trajectory decay with the
convergence bounds derived for the smoothed spectral gradient flow.
All matrix functions are evaluated with NumPy, and continuous-time
trajectories are integrated using the classical RK4
method. The detailed constructions are included in the corresponding
subsections below.

\subsection{Comparison with Discrete Spectral Algorithms}
\label{subsec:discrete_continuous_comparison}

We first assess whether the proposed smoothed continuous-time models
capture the trajectories of the corresponding discrete exact-polar
algorithms. We consider
\begin{equation}
    f(W)=\frac12\|W\|_F^2,
    \qquad
    \nabla f(W)=W,
    \label{eq:discrete_continuous_quadratic}
\end{equation}
with
\[
    W_0
    =
    U\operatorname{diag}(4.13,3.07,2.03,1.237)V^\top,
\]
where \(U,V\in\mathbb R^{4\times4}\) are randomly generated orthogonal
matrices. The initial gradient is therefore full rank and separated from
zero, while its singular values necessarily become small as the
trajectories approach the minimizer \(W^\star=0\).

For the direct spectral method, we compare
\begin{align}
    W_{k+1}^{\mathrm d}
    &=
    W_k^{\mathrm d}
    -
    \eta\operatorname{Polar}\bigl(\nabla f(W_k^{\mathrm d})\bigr),
    \label{eq:discrete_direct_polar_method}
    \\
    \dot W_\epsilon(t)
    &=
    -h_\epsilon\bigl(\nabla f(W_\epsilon(t))\bigr).
    \label{eq:continuous_direct_smoothed_model}
\end{align}
For the momentum case, we compare the normalized exact-polar Muon-type
iteration
\begin{align}
    M_{k+1}^{\mathrm d}
    &=
    \beta M_k^{\mathrm d}
    +(1-\beta)\nabla f(W_k^{\mathrm d}),
    \nonumber\\
    W_{k+1}^{\mathrm d}
    &=
    W_k^{\mathrm d}
    -\eta\operatorname{Polar}(M_{k+1}^{\mathrm d}),
    \label{eq:discrete_exact_polar_muon}
\end{align}
with the smoothed momentum model
\begin{align}
    \dot W_\epsilon(t)
    &=
    -h_\epsilon(M_\epsilon(t)),
    \nonumber\\
    \dot M_\epsilon(t)
    &=
    \lambda
    \bigl(
        \nabla f(W_\epsilon(t))-M_\epsilon(t)
    \bigr).
    \label{eq:continuous_smoothed_momentum_model}
\end{align}
We initialize \(M_0=\nabla f(W_0)\) and choose
\(\beta=e^{-\lambda\eta}\) to match the momentum-decay time scale.

The continuous trajectories are sampled at \(t_k=k\eta\). We report the
cumulative discrepancies
\begin{align}
    \mathcal E_{\mathrm{dir}}(t_k)
    &=
    \max_{0\leq j\leq k}
    \|W_\epsilon(t_j)-W_j^{\mathrm d}\|_F,
    \label{eq:direct_discrete_continuous_error}
    \\
    \mathcal E_{\mathrm{mom}}(t_k)
    &=
    \max_{0\leq j\leq k}
    \left(
        \|W_\epsilon(t_j)-W_j^{\mathrm d}\|_F
        +
        \|M_\epsilon(t_j)-M_j^{\mathrm d}\|_F
    \right).
    \label{eq:momentum_discrete_continuous_error}
\end{align}
We set
\(
    \eta=2\times10^{-2},
    \qquad
    \lambda=2,
    \qquad
    \beta=e^{-0.04},
\)
and compare the smoothing parameters
\(
    \epsilon\in\{10^{-3},10^{-4},10^{-5}\}.
\)
The continuous systems are integrated
by RK4 with multiple substeps within each discrete interval.

\begin{figure*}[t]
    \centering
    \includegraphics[width=0.82\textwidth]
    {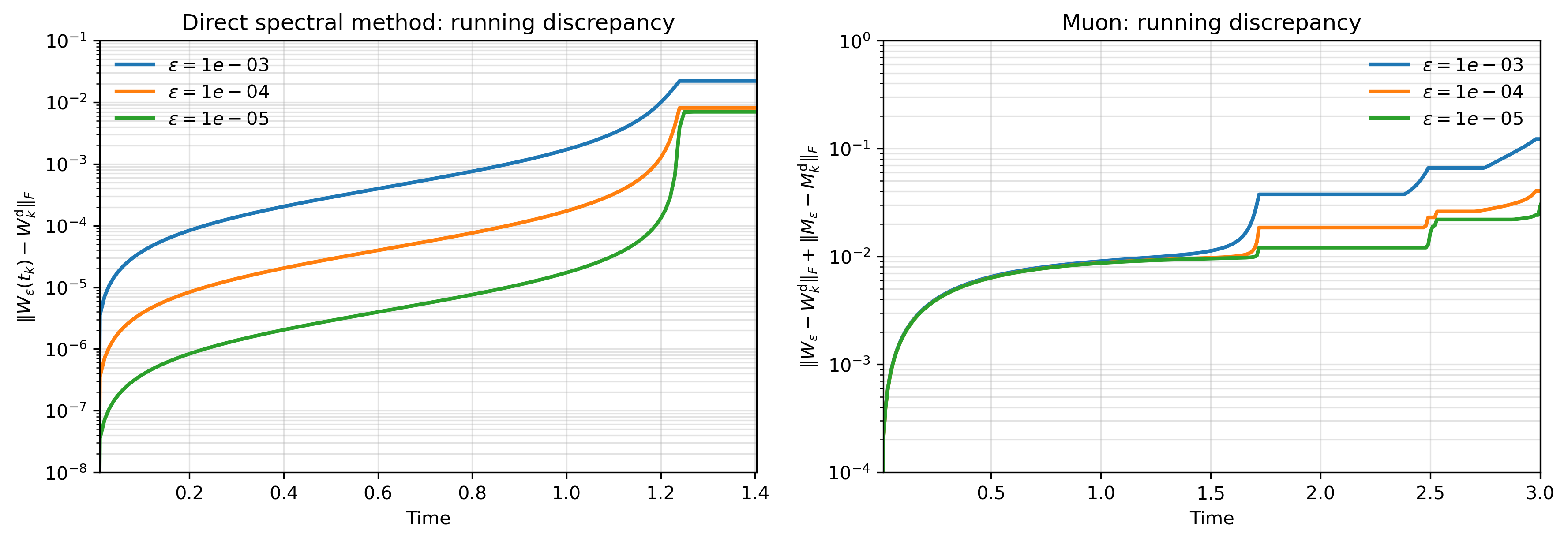}
    \caption{
    Cumulative trajectory discrepancies between the discrete exact-polar
    algorithms and the corresponding smoothed continuous-time models.
    Left: direct spectral-gradient method.
    Right: momentum-augmented Muon-type method.
    }
    \label{fig:discrete_continuous_discrepancy}
\end{figure*}

Figure~\ref{fig:discrete_continuous_discrepancy} shows that the
continuous models remain close to the corresponding discrete trajectories
during the early stage of optimization, with particularly small error in
the direct case. The momentum discrepancy is larger because it also
contains the finite-step approximation error of the momentum recursion.
Both the direct and momentum experiments exhibit the same overall ordering: smaller
values of \(\epsilon\) generally lead to smaller discrepancies.

As the trajectories approach \(W^\star=0\), the cumulative errors increase.
This behavior is consistent with the loss of the rank-safe regime: the
exact polar map maintains unit response on nonzero singular modes, whereas
the smoothed factor
\(
    \frac{\sigma}{\sqrt{\sigma^2+\epsilon}}
\)
vanishes continuously as \(\sigma\to0\). The experiment therefore
supports the proposed flows as finite-time models of discrete spectral
algorithms away from rank degeneracy, but not as globally exact
representations near zero singular values or rank transitions.

\subsection{Effects of Gradient Effective Rank and Covariance Similarity}
\label{subsec:numerical_local_advantage}

We next examine the fixed-activation quadratic model
 \(f(W)=\frac12\|WA-Y\|_F^2\) and the criterion
\begin{equation}
    \Gamma(W_0)
    =
    r_{\mathrm{eff}}(G_0)
    \operatorname{sim}_{\mathrm{HS}}(\rho_{G_0},\rho_A),
    \qquad
    G_0=\nabla f(W_0).
    \label{eq:numerical_gamma}
\end{equation}

Two controlled sweeps are performed. First,
\(r_{\mathrm{eff}}(G_0)=1.6\) is fixed while the covariance similarity is
varied, giving the theoretical transition \(\operatorname{sim}_{\mathrm{HS}}(\rho_{G_0},\rho_A)=1/1.6=0.625\).
Second, the covariance similarity is fixed at \(0.7\), while the gradient
effective rank is varied, giving the transition \(r_{\mathrm{eff}}(G_0)=1/0.7=10/7\).
Both sweeps compare the Frobenius direction with the exact matrix-polar
direction \(\operatorname{Polar}(G)\), using the local
normalizations
\begin{align*}
    \alpha_F(W)&=\frac{\|G(W)\|_F^2}{\|G(W)A\|_F^2},\\
    \alpha_P(W)&=\frac{\langle G(W),\operatorname{Polar}(G(W))\rangle}
    {\|\operatorname{Polar}(G(W))A\|_F^2}.
\end{align*}

For the covariance-similarity sweep, we set \(W_0=0\),
\(G_0=\operatorname{diag}(\sqrt{0.9},\sqrt{0.1})\), and
\begin{align*}
    A(\theta)
    &=R(\theta)\operatorname{diag}(\sqrt{0.95},\sqrt{0.05}),\\
    Y(\theta)&=-G_0A(\theta)^{-\top},
    \qquad \theta\in[0,\pi/2].
\end{align*}
This construction gives \(r_{\mathrm{eff}}(G_0)=1.6\) and preserves
\(\nabla f(W_0)=G_0\). We use \(151\) equally spaced values of \(\theta\)
and estimate the initial slopes with \(\Delta t=10^{-5}\). The
representative trajectories in
Figure~\ref{fig:representative_loss_curves} use \(\theta=0\) and
\(\theta=\pi/4\), yielding \(\Gamma(W_0)=1.376\) and \(0.8\),
respectively; they are integrated over \([0,0.08]\) with a time step of
\(10^{-4}\).

For the effective-rank sweep, let \(s_0=0.7\) and
\begin{align*}
    G_0(p)&=\operatorname{diag}(\sqrt p,\sqrt{1-p}),\\
    A(p)&=\operatorname{diag}(\sqrt{q(p)},\sqrt{1-q(p)}),\\
    q(p)&=\frac{s_0-1+p}{2p-1},
    \qquad p\in[0.72,0.99].
\end{align*}
Then \(\|G_0(p)\|_F=1\),
\(r_{\mathrm{eff}}(G_0(p))=(\sqrt p+\sqrt{1-p})^2\), and the construction
fixes \(\operatorname{sim}_{\mathrm{HS}}(\rho_{G_0(p)},\rho_{A(p)})=s_0\).
Setting \(W_0=0\) and \(Y(p)=-G_0(p)A(p)^{-\top}\) enforces
\(\nabla f(W_0)=G_0(p)\). The sweep uses \(151\) values of \(p\) and
\(\Delta t=10^{-5}\).

\begin{figure*}[t]
    \centering
    \includegraphics[width=0.78\textwidth]
    {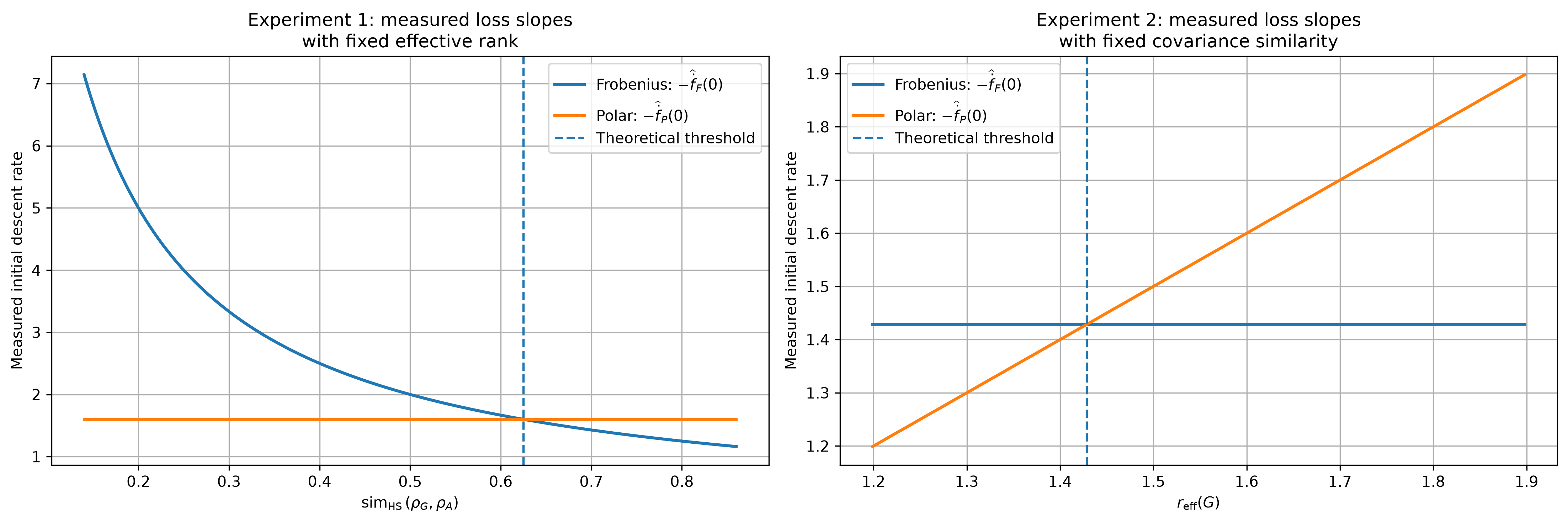}
    \caption{
    Measured initial descent rates under controlled variations of
    covariance similarity and gradient effective rank. The crossings occur
    near the theoretical thresholds determined by \(\Gamma(W_0)=1\).
    }
    \label{fig:local_rate_sweeps}
\end{figure*}

Figure~\ref{fig:local_rate_sweeps} shows that the ordering of the two
measured descent rates changes near the predicted thresholds in both
experiments. The matrix-polar direction produces a larger descent rate when
\(\Gamma(W_0)>1\), while the Frobenius-gradient direction produces a larger
descent rate when \(\Gamma(W_0)<1\).

To visualize this comparison, we select two representative
covariance-similarity settings. They satisfy
\(\Gamma(W_0)=1.376>1\) and \(\Gamma(W_0)=0.8<1\), respectively.

\begin{figure*}[t]
    \centering
    \includegraphics[width=0.78\textwidth]
    {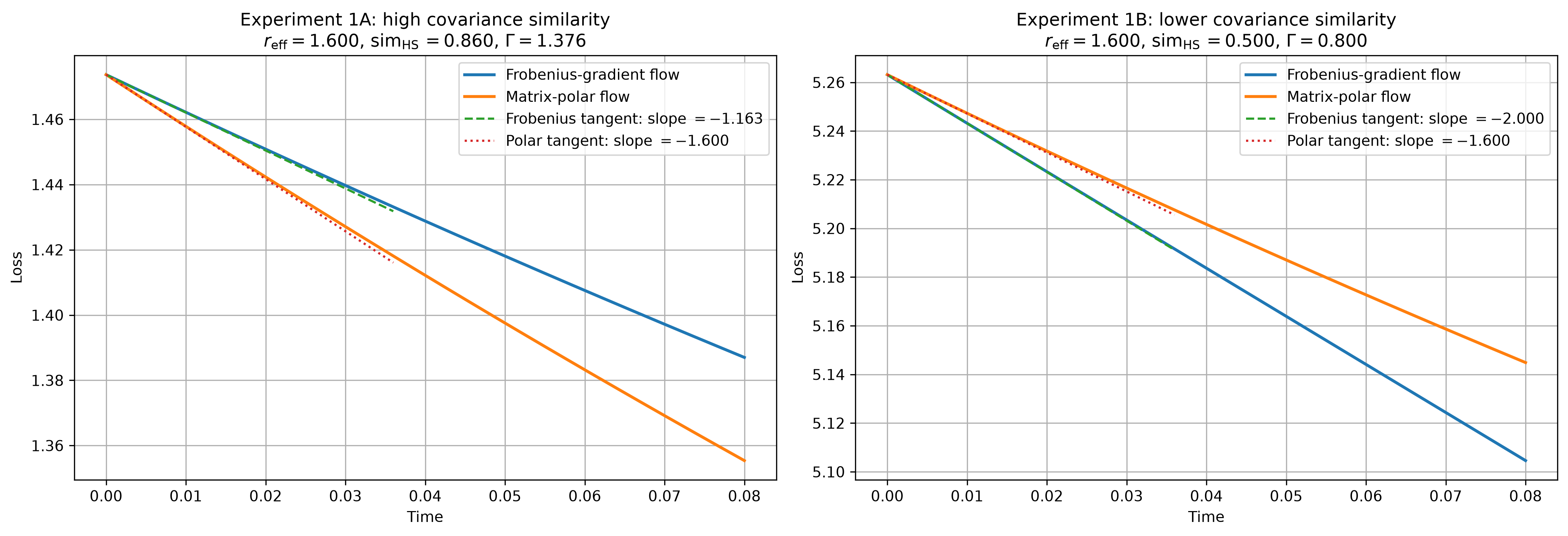}
    \caption{
    Representative loss trajectories and initial tangent lines.
    The matrix-polar trajectory has the steeper initial decrease when
    \(\Gamma(W_0)>1\), whereas the Frobenius-gradient trajectory is
    steeper when \(\Gamma(W_0)<1\).
    }
    \label{fig:representative_loss_curves}
\end{figure*}

The results illustrate that the local matrix-polar advantage is jointly
controlled by the singular-value distribution of the gradient and the
covariance alignment between the gradient and activation matrices.

\subsection{Verification of the Trajectory Convergence Bounds}
\label{subsec:trajectory_bound_verification}

Finally, we examine the convergence bounds for the direct smoothed
spectral gradient flow
\begin{align}
    \dot W(t)=-h_\epsilon\bigl(\nabla f(W(t))\bigr).
    \label{eq:numerical_direct_spectral_flow}
\end{align}
Let \(E(t)=f(W(t))-f^\star\).
The flow is integrated without the local scalar normalization used in the
slope comparisons. In both tests, \(W\in\mathbb R^{12\times12}\),
\(\epsilon=10^{-2}\), and \(\Delta t=10^{-2}\).

For the strongly convex test, we use
\begin{align*}
    f_{\mathrm{PL}}(W)
    =\frac12\left\langle W-W^\star,(W-W^\star)H\right\rangle,
\end{align*}
where \(H=Q\Lambda Q^\top\), \(Q\) is generated from a Gaussian matrix,
and the eigenvalues are logarithmically spaced between \(\mu=1\) and
\(L=20\). Using random seed \(0\), \(W^\star\) has standard Gaussian
entries and \(W_0=W^\star+3Z\), where \(Z_{ij}\sim\mathcal N(0,1)\); the
trajectory is integrated to \(T=30\). Writing
\(E_0=f_{\mathrm{PL}}(W_0)\), we use
\(K_G=L\sqrt{2E_0/\mu}\) and
\(c_\epsilon=1/\sqrt{K_G^2+\epsilon}\).

For the general convex test, we use
\begin{align*}
    f_{\mathrm{cvx}}(W)
    =\sum_{i,j}\left(\sqrt{1+(W_{ij}-W^\star_{ij})^2}-1\right).
\end{align*}
Using random seed \(10\), the initial condition is
\(W_0=W^\star+5Z\), and the trajectory is integrated to \(T=40\). For
\(E_0=f_{\mathrm{cvx}}(W_0)\), the constants are chosen as
\(R_0=\sqrt{mn}\sqrt{(E_0+1)^2-1}\), \(K_G=\sqrt{mn}\), and
\(c_\epsilon=1/\sqrt{K_G^2+\epsilon}\).

The corresponding theoretical bounds are
\begin{align}
    E(t)
    &\leq
    \mathcal B_{\mathrm{PL}}(t)
    :=
    E(0)e^{-2\mu c_\epsilon t},
    \label{eq:numerical_pl_bound}
    \\
    E(t)
    &\leq
    \mathcal B_{\mathrm{cvx}}(t)
    :=
    \frac{R_0^2E(0)}
    {R_0^2+c_\epsilon tE(0)}.
    \label{eq:numerical_general_convex_bound}
\end{align}
In each case, \(c_\epsilon\) is determined from a uniform gradient bound
on the initial sublevel set.

For each trajectory, we plot
\begin{align}
    \mathcal Q(t)=\frac{E(t)}{\mathcal B(t)}.
    \label{eq:numerical_bound_ratio}
\end{align}
The theoretical estimate is satisfied whenever
\(\mathcal Q(t)\leq1\). Ratios are shown on a logarithmic vertical scale
and truncated once the observed gap falls below \(10^{-12}E_0\), avoiding
values dominated by floating-point roundoff.

\begin{figure*}[t]
    \centering
    \includegraphics[width=0.82\textwidth]
    {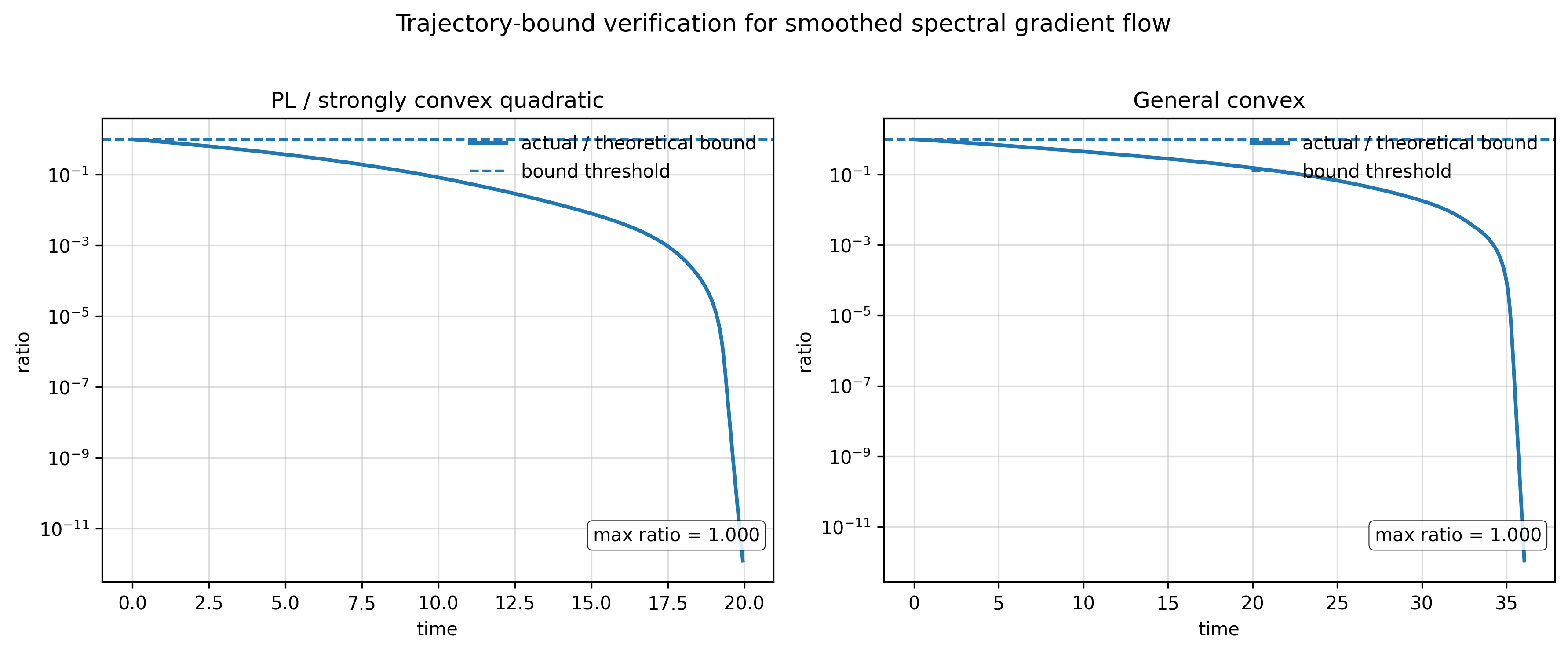}
    \caption{
    Ratio between the observed objective gap and the theoretical
    trajectory bound. Left: strongly convex quadratic loss.
    Right: general convex loss. The dashed line denotes
    \(\mathcal Q(t)=1\).
    }
    \label{fig:trajectory_bound_verification}
\end{figure*}

Figure~\ref{fig:trajectory_bound_verification} shows that the observed
ratios remain below one over the numerically resolved trajectories,
supporting both the exponential PL bound and the sublinear convex bound.
The bounds are conservative because their constants are obtained from
uniform sublevel-set estimates.

\section{Conclusion}
\label{sec:conclusion}

This paper studied a class of spectral gradient flows induced by smoothed
matrix-polar feedback. The flow uses
\[
h_\epsilon(G)
=
G(G^\top G+\epsilon I)^{-1/2}
\]
as the gradient-feedback direction. This map preserves the directional
structure of the matrix-polar factor, while the smoothing parameter
$\epsilon>0$ removes the discontinuity at singular points. We first
established the regularity, monotonicity, and boundedness properties of
this feedback map and then proved the global well-posedness of the resulting
smoothed spectral gradient flow. Using the associated energy-dissipation
structure, we further established global existence, vanishing of the
gradient, and convergence to the stationary set.

At the global level, we established integral gradient bounds,
nonconvex stationarity rates, convex objective-gap estimates, and
exponential convergence under the PL condition for the direct
smoothed spectral flow. For the momentum-augmented system, we
derived a compatible Lyapunov structure together with convergence
to the stationary set and a stationarity-rate estimate. Under a uniform
positive lower bound on the relevant
singular values, we further proved that the smoothed direct and
momentum-augmented flows approximate their ideal matrix-polar
counterparts with an $\mathcal{O}(\epsilon)$ error on every fixed
finite time interval, providing a finite-horizon connection to
the Muon matrix-polar dynamics. At the
local level, a common local quadratic model yielded the
normalized descent-rate ratio $\Gamma_\epsilon(W)$ for comparing
the smoothed spectral and Frobenius-gradient directions. The
criterion shows that any local advantage of spectral normalization
is conditional on the interaction between the singular-value
structure of the gradient and the directional curvature induced by the
local Hessian.

Several directions remain open. First, it remains to construct a
continuous-time model that remains valid across rank changes and to
quantify how such a model approximates discrete Muon-type algorithms with finite
stepsizes, stochastic gradients, and practical momentum recursions.
Second, an adaptive choice of $\epsilon$ may provide a better balance
between regularity and the strength of spectral normalization. Third,
extending the local spectral-structure analysis to deeper network
architectures may clarify the favorable regimes and failure modes of
matrix-polar-type updates in practical training.

\section*{References}
\bibliographystyle{IEEEtran}
\bibliography{reference}

\appendices
\section{Proofs of Spectral Feedback Properties}
\label{app:heps_properties}

\begin{proof}
Let
\[
A(M)=M^\top M+\epsilon I.
\]
Since $\epsilon>0$, $A(M)$ is positive definite. For any perturbation
$H\in\mathbb R^{m\times n}$, the differential formula for a spectral trace
function gives
\[
d\,\operatorname{Tr}(A^{1/2})
=
\frac12\operatorname{Tr}(A^{-1/2}dA).
\]
Consequently,
\[
\begin{aligned}
d\Phi_\epsilon(M)[H]
&=
\frac12\operatorname{Tr}
\left(
A(M)^{-1/2}(M^\top H+H^\top M)
\right)\\
&=
\left\langle
M A(M)^{-1/2},H
\right\rangle .
\end{aligned}
\]
Thus
\[
\nabla_M\Phi_\epsilon(M)
=
M(M^\top M+\epsilon I)^{-1/2}
=
h_\epsilon(M),
\]
which proves \eqref{eq:Phi_gradient}.

For monotonicity, define
\[
\phi_\epsilon(s)=\sqrt{s^2+\epsilon}-\sqrt{\epsilon}.
\]
The function $\phi_\epsilon$ is even and convex because
\[
\phi_\epsilon''(s)
=
\frac{\epsilon}{(s^2+\epsilon)^{3/2}}
\ge0.
\]
Hence
\[
\Phi_\epsilon(M)
=
\sum_{i=1}^{d}\phi_\epsilon(\sigma_i(M))
\]
is a convex unitarily invariant spectral function. Since
$h_\epsilon=\nabla\Phi_\epsilon$, convexity yields
\[
\left\langle
h_\epsilon(M)-h_\epsilon(N),M-N
\right\rangle
\ge0,
\]
which is \eqref{eq:heps_monotone}.

To establish the Lipschitz estimate, let
\[
g_\epsilon(s)=\frac{s}{\sqrt{s^2+\epsilon}}.
\]
Then
\[
|g_\epsilon'(s)|
=
\frac{\epsilon}{(s^2+\epsilon)^{3/2}}
\le
\frac1{\sqrt{\epsilon}}.
\]
Introduce the symmetric dilation
\[
\mathcal T(M)
=
\begin{pmatrix}
0&M\\
M^\top&0
\end{pmatrix}.
\]
Functional calculus gives
\[
g_\epsilon(\mathcal T(M))
=
\begin{pmatrix}
0&h_\epsilon(M)\\
h_\epsilon(M)^\top&0
\end{pmatrix}.
\]
The Frobenius-norm spectral-operator estimate therefore implies
\[
\begin{aligned}
\sqrt2\|h_\epsilon(M)-h_\epsilon(N)\|_F
&\le
\frac1{\sqrt{\epsilon}}
\|\mathcal T(M)-\mathcal T(N)\|_F\\
&=
\frac{\sqrt2}{\sqrt{\epsilon}}\|M-N\|_F.
\end{aligned}
\]
This proves \eqref{eq:heps_lipschitz}. Continuous differentiability follows
from the smoothness of the inverse square-root map on the positive-definite cone.

Finally, let $M=U\Sigma V^\top$ be a singular value decomposition. Then
\[
h_\epsilon(M)
=
U\operatorname{diag}
\left(
\frac{\sigma_i}{\sqrt{\sigma_i^2+\epsilon}}
\right)V^\top .
\]
It follows that
\[
\|h_\epsilon(M)\|_F^2
=
\sum_{i=1}^{d}
\frac{\sigma_i^2}{\sigma_i^2+\epsilon}
\le d,
\]
which proves \eqref{eq:heps_bound}. Moreover,
\[
\left\langle h_\epsilon(M),M\right\rangle
=
\sum_{i=1}^{d}
\frac{\sigma_i^2}{\sqrt{\sigma_i^2+\epsilon}}.
\]
The upper estimate in \eqref{eq:heps_dissipation_bounds} follows from
$\sqrt{\sigma_i^2+\epsilon}\ge\sqrt{\epsilon}$. If $\|M\|_F\le K$, then
$\sigma_i\le K$ and
$\sqrt{\sigma_i^2+\epsilon}\le\sqrt{K^2+\epsilon}$, which gives the lower
estimate after summation.
\end{proof}

\section{Proofs for the Direct Smoothed Spectral Gradient Flow}
\label{app:direct_flow_proofs}

\subsection{Global Well-Posedness}
\label{app:direct_wellposedness}

\begin{proof}
Define \(F(W)=-h_\epsilon(\nabla f(W))\). Under Assumption~\ref{ass:local_gradient_lipschitz},
$\nabla f$ is locally Lipschitz, while
$h_\epsilon$ is globally Lipschitz by Lemma~\ref{lem:heps_properties}.
Hence $F$ is locally Lipschitz and the Picard--Lindel\"of theorem gives a
unique maximal classical solution.

The uniform feedback bound gives
\(\|\dot W(t)\|_F=\|h_\epsilon(\nabla f(W(t)))\|_F\le\sqrt d\), hence
\(\|W(t)-W_0\|_F\le\sqrt d\,t\). Thus the state cannot escape to infinity
in finite time, and the ODE continuation theorem extends the solution to
all $t\ge0$.
\end{proof}

\subsection{Convergence to the Stationary Set}
\label{app:direct_convergence}

\begin{proof}
Let \(G(t)=\nabla f(W(t))\). Since
\(\frac{d}{dt}f(W(t))=-\langle G(t),h_\epsilon(G(t))\rangle\le0\), the
trajectory remains in the bounded sublevel set
\(\mathcal L_0=\{W:f(W)\le f(W_0)\}\). Hence
\(K_G=\sup_{W\in\mathcal L_0}\|\nabla f(W)\|_F<\infty\). With
\(c_\epsilon=1/\sqrt{K_G^2+\epsilon}\), Lemma~\ref{lem:heps_properties}
implies
\[
\left\langle
G(t),h_\epsilon(G(t))
\right\rangle
\ge
c_\epsilon\|G(t)\|_F^2.
\]
Hence
\(\frac{d}{dt}f(W(t))\le-c_\epsilon\|\nabla f(W(t))\|_F^2\). Integration
and the lower bound $f_{\inf}$ yield
\[
\int_0^\infty
\|\nabla f(W(t))\|_F^2\,dt
\le
\frac{f(W_0)-f_{\inf}}{c_\epsilon}
<\infty.
\]

On the compact set $\mathcal L_0$, the gradient has a Lipschitz constant
$L_0$. Since $\|\dot W(t)\|_F\le\sqrt d$, the map
$t\mapsto\nabla f(W(t))$ is uniformly continuous; Barbalat's lemma gives
\(\nabla f(W(t))\to0\).
If $\bar W\in\omega(W_0)$, then there exists a sequence $t_k\to\infty$ such that $W(t_k)\to\bar W$.
Continuity of $\nabla f$ gives $\nabla f(\bar W)=0$, and therefore
\(\omega(W_0)\subseteq\mathcal S\).
\end{proof}

\section{Proofs for the Momentum-Augmented System}
\label{app:momentum_flow_proofs}

\subsection{Global Convergence}
\label{app:momentum_convergence}

\begin{proof}
The vector field in \eqref{eq:momentum_augmented_flow} is locally Lipschitz
under Assumption~\ref{ass:local_gradient_lipschitz}. Hence a unique
maximal solution exists. On every finite interval, \(\|\dot W(t)\|_F\le\sqrt d\),
so $W(t)$ is bounded. Boundedness of $W$ and local Lipschitz continuity of
$\nabla f$ imply boundedness of the gradient on that interval. The linear
momentum equation and Gronwall's inequality then bound $M(t)$. Thus
finite-time escape is impossible and the solution is global.

Along the trajectory, the storage function \eqref{eq:momentum_storage}
satisfies \eqref{eq:momentum_storage_dissipation}; hence
\(V(W(t),M(t))\le V_0\). The inequality
\(a(f(W(t))-f_{\inf})\le V_0\), together with coercivity, bounds $W(t)$.
Moreover, \(\Phi_\epsilon(M(t))\le V_0\). If
$\sigma_i$ are the singular values of $M$, then
\[
\Phi_\epsilon(M)
\ge
\sum_i\sigma_i-d\sqrt{\epsilon}
=
\|M\|_*-d\sqrt{\epsilon}.
\]
Therefore $M(t)$ is bounded as well.

The zero-dissipation set is characterized by
\(\langle h_\epsilon(M),M\rangle=0\) if and only if \(M=0\). If a complete
trajectory remains in this set, then \(\dot M=0\), so the momentum equation
gives \(\nabla f(W)=0\), and also \(\dot W=-h_\epsilon(0)=0\). Thus the
largest invariant subset is \(\mathcal S\times\{0\}\), and LaSalle's
invariance principle \cite{lasalle1961stability,khalil2002nonlinear} gives
\(\omega(W_0,M_0)\subseteq\mathcal S\times\{0\}\).
\end{proof}

\subsection{Momentum-Augmented System and Velocity Bounds}
\label{app:momentum_trajectory_certificates}
\label{app:momentum_trajectory_bounds}

\begin{proof}
The boundedness established in Appendix~\ref{app:momentum_convergence}
ensures that \(K_M=\sup_{t\ge0}\|M(t)\|_F<\infty\).
Lemma~\ref{lem:heps_properties} and the storage identity give
\[
\dot V(t)
\le
-\frac{b}{\sqrt{K_M^2+\epsilon}}\|M(t)\|_F^2.
\]
Integration yields
\[
\int_0^\infty\|M(t)\|_F^2\,dt
\le
\frac{\sqrt{K_M^2+\epsilon}}{b}V_0.
\]
Furthermore,
\[
\|\dot W(t)\|_F^2
=
\|h_\epsilon(M(t))\|_F^2
\le
\frac1{\sqrt{\epsilon}}
\langle h_\epsilon(M(t)),M(t)\rangle.
\]
Using
$\dot V=-b\langle h_\epsilon(M),M\rangle$ and integrating gives
\[
\int_0^\infty\|\dot W(t)\|_F^2\,dt
\le
\frac{V_0}{b\sqrt{\epsilon}}.
\]
\end{proof}

\subsection{Nonconvex Stationarity}
\label{app:momentum_nonconvex}

\begin{proof}
Boundedness of $(W(t),M(t))$ implies that $Q(W(t),M(t))$ is bounded, so
$\Delta_Q<\infty$. Since $\nabla f$ is Lipschitz on $\mathcal K$, the map
$t\mapsto\nabla f(W(t))$ is absolutely continuous and
\(\|\frac{d}{dt}\nabla f(W(t))\|_F\le L_{\mathcal K}\|\dot W(t)\|_F\) for
a.e. $t$. Using $\dot W=-h_\epsilon(M)$ and
\(\|h_\epsilon(M)\|_F\le\|M\|_F/\sqrt{\epsilon}\) gives
\[
\left|
\left\langle
\frac{d}{dt}\nabla f(W(t)),M(t)
\right\rangle
\right|
\le
\frac{L_{\mathcal K}}{\sqrt{\epsilon}}\|M(t)\|_F^2.
\]
Moreover,
\[
\left\langle\nabla f(W),\dot M\right\rangle
=
a\|\nabla f(W)\|_F^2
-b\langle\nabla f(W),M\rangle.
\]
Young's inequality with parameter $a/2$ yields
\[
b|\langle\nabla f(W),M\rangle|
\le
\frac a2\|\nabla f(W)\|_F^2
+
\frac{b^2}{2a}\|M\|_F^2.
\]
Therefore,
\[
\dot Q(t)
\ge
\frac a2\|\nabla f(W(t))\|_F^2
-C_M\|M(t)\|_F^2.
\]
After integration,
\[
\frac a2
\int_0^T\|\nabla f(W(t))\|_F^2\,dt
\le
\Delta_Q
+
C_M\int_0^T\|M(t)\|_F^2\,dt.
\]
Applying Lemma~\ref{thm:momentum_flow_trajectory_bounds} gives
\[
\int_0^T\|\nabla f(W(t))\|_F^2\,dt
\le
\frac2a
\left[
\Delta_Q
+
C_M\frac{\sqrt{K_M^2+\epsilon}}{b}V_0
\right].
\]
The minimum is bounded by the time average, which proves the theorem.
\end{proof}

\subsection{Convex Averaged Estimate}
\label{app:momentum_convex}

\begin{proof}
Fix \(W^*\in\mathcal W^*\) and let
\(D_{\max}=\sup_{t\ge0}\|W(t)-W^*\|_F\).
By convexity and the momentum equation,
\[
\begin{aligned}
a\int_0^T(f(W(t))-f_*)\,dt
&\le
\int_0^T
\left\langle \dot M(t),W(t)-W^*\right\rangle dt\\
&\quad+
b\int_0^T
\left\langle M(t),W(t)-W^*\right\rangle dt\\
=
\left[\langle M,W-W^*\rangle\right]_0^T\\
&\quad-
\int_0^T\langle M,\dot W\rangle dt\\
&\quad+
b\int_0^T\langle M,W-W^*\rangle dt.
\end{aligned}
\]
The boundary term is at most $2D_{\max}K_M$. Since
$\dot W=-h_\epsilon(M)$,
\(-\int_0^T\langle M,\dot W\rangle dt=(V_0-V(T))/b\le V_0/b\). Let
\(B_M=\sqrt{K_M^2+\epsilon}V_0/b\).
By Cauchy--Schwarz and Lemma~\ref{thm:momentum_flow_trajectory_bounds},
\[
b\int_0^T\langle M,W-W^*\rangle dt
\le
bD_{\max}\sqrt{B_M T}.
\]
Hence
\(\int_0^T(f(W(t))-f_*)\,dt\le C_0+C_1\sqrt T\) for constants
$C_0,C_1$ independent of $T$. Jensen's inequality gives
\(f(\bar W_T)-f_*\le C_0/T+C_1/\sqrt T\), which is bounded by
\((C_0+C_1)/\sqrt T\) for $T\ge1$.
Taking $C_{\rm mom}=C_0+C_1$ proves the theorem.
\end{proof}

\section{Proof of Proposition~\ref{prop:finite-horizon-polar-approximation}}
\label{app:finite-horizon-polar-approximation}

\subsection{Uniform Approximation of the Feedback Law}

Let
\(Z=U\operatorname{diag}(\sigma_1,\ldots,\sigma_d)V^\top\) be a compact
singular value decomposition of a full-rank matrix $Z$. Then
\(\operatorname{Polar}(Z)=UV^\top\) and
\(h_\epsilon(Z)=U\operatorname{diag}(\sigma_i/\sqrt{\sigma_i^2+\epsilon})V^\top\).
By the unitary invariance of the Frobenius norm,
\begin{align}
    \left\|
        h_\epsilon(Z)-\operatorname{Polar}(Z)
    \right\|_F^2
    &=
    \sum_{i=1}^d
    \left(
        1-\frac{\sigma_i}{\sqrt{\sigma_i^2+\epsilon}}
    \right)^2.
    \label{eq:feedback-error-svd}
\end{align}
The scalar function
\(s\mapsto1-s/\sqrt{s^2+\epsilon}\) is nonincreasing on $(0,\infty)$.
Therefore, if
$\sigma_i\geq\underline{\sigma}$ for every $i$, then
\begin{align}
    \left\|
        h_\epsilon(Z)-\operatorname{Polar}(Z)
    \right\|_F
    &\leq
    \sqrt{d}
    \left(
        1-
        \frac{\underline{\sigma}}
        {\sqrt{\underline{\sigma}^2+\epsilon}}
    \right).
    \label{eq:feedback-error-first-bound}
\end{align}
Moreover, for every $s>0$,
\begin{align}
    1-\frac{s}{\sqrt{s^2+\epsilon}}
    &=
    \frac{
        \sqrt{s^2+\epsilon}-s
    }{
        \sqrt{s^2+\epsilon}
    }
    \notag\\
    &=
    \frac{\epsilon}{
        \sqrt{s^2+\epsilon}
        \bigl(\sqrt{s^2+\epsilon}+s\bigr)
    }
    \leq
    \frac{\epsilon}{2s^2}.
    \label{eq:scalar-polar-error}
\end{align}
Taking $s\geq\underline{\sigma}$ in
\eqref{eq:scalar-polar-error} and substituting into
\eqref{eq:feedback-error-svd} gives
\[
    \left\|
        h_\epsilon(Z)-\operatorname{Polar}(Z)
    \right\|_F
    \leq
    \frac{\sqrt{d}}{2\underline{\sigma}^2}\epsilon.
\]
This proves \eqref{eq:uniform-feedback-approximation}.

For later use, define
\begin{align}
    c_{\underline{\sigma}}
    :=
    \frac{\sqrt{d}}{2\underline{\sigma}^2}.
    \label{eq:rank-safe-feedback-constant}
\end{align}

\subsection{Approximation of the Direct Flow}

Let \(\mathcal{K}_{\mathrm{dir}}=\{W_{\mathrm P}(t):0\leq t\leq T\}\),
which is compact. The rank-safe condition
\eqref{eq:direct-rank-safe-condition}, together with continuity of
the singular values and of $\nabla f$, gives a compact tubular
neighborhood $\mathcal{U}_{\mathrm{dir}}$ of
$\mathcal{K}_{\mathrm{dir}}$ on which
\(\sigma_{\min}(\nabla f(W))\geq\underline{\sigma}\).
On this neighborhood, the ideal vector field
\(F_{\mathrm P}^{\mathrm{dir}}(W)=-\operatorname{Polar}(\nabla f(W))\)
is Lipschitz, say with constant $L_{\mathrm{dir}}$, because
$\operatorname{Polar}$ is smooth on the full-rank set and $\nabla f$ is
locally Lipschitz. The smoothed vector field
\(F_\epsilon^{\mathrm{dir}}(W)=-h_\epsilon(\nabla f(W))\)
satisfies, by \eqref{eq:uniform-feedback-approximation},
\[
    \|F_\epsilon^{\mathrm{dir}}(W)
    -F_{\mathrm P}^{\mathrm{dir}}(W)\|_F
    \leq
    c_{\underline{\sigma}}\epsilon,
    \qquad
    W\in\mathcal{U}_{\mathrm{dir}}.
\]

Let $\tau_\epsilon$ be the first exit time of $W_\epsilon(t)$
from $\mathcal{U}_{\mathrm{dir}}$, truncated at $T$, and set
\(e_{\mathrm{dir}}(t)=\|W_\epsilon(t)-W_{\mathrm P}(t)\|_F\).
For $0\leq t\leq\tau_\epsilon$, the common initial condition gives
\[
    e_{\mathrm{dir}}(t)
    \leq
    c_{\underline{\sigma}}\epsilon t
    +
    L_{\mathrm{dir}}
    \int_0^t e_{\mathrm{dir}}(s)\,ds.
\]
Thus
\[
    e_{\mathrm{dir}}(t)
    \leq
    c_{\underline{\sigma}}\epsilon
    \psi_{L_{\mathrm{dir}}}(t),
    \qquad
    \psi_L(t)=
    \begin{cases}
        (e^{Lt}-1)/L, & L>0,\\
        t, & L=0.
    \end{cases}
\]
Set \(C_T^{\mathrm{dir}}=c_{\underline{\sigma}}\psi_{L_{\mathrm{dir}}}(T)\).
Choosing $\epsilon_T^{\mathrm{dir}}$ so that
$C_T^{\mathrm{dir}}\epsilon_T^{\mathrm{dir}}$ is smaller than the
tube radius, the trajectory cannot exit the tube before time $T$.
Therefore, for $0<\epsilon\leq\epsilon_T^{\mathrm{dir}}$,
\[
    \sup_{0\leq t\leq T}
    \|W_\epsilon(t)-W_{\mathrm P}(t)\|_F
    \leq
    C_T^{\mathrm{dir}}\epsilon,
\]
which proves \eqref{eq:direct-trajectory-approximation}.

\subsection{Approximation of the Momentum-Augmented Flow}

The momentum argument proceeds analogously under the product norm
\(\|(W,M)\|_\times=\|W\|_F+\|M\|_F\). Let
\(X_{\mathrm P}(t)=(W_{\mathrm P}(t),M_{\mathrm P}(t))\). By
\eqref{eq:momentum-rank-safe-condition}, there is a compact
tubular neighborhood $\mathcal{U}_{\mathrm{mom}}$ of
\(\{X_{\mathrm P}(t):0\leq t\leq T\}\) on which
\(\sigma_{\min}(M)\geq\underline{\sigma}\).
Define
\begin{align*}
    F_{\mathrm P}^{\mathrm{mom}}(W,M)
    &=
    \begin{pmatrix}
        -\operatorname{Polar}(M)\\
        a\nabla f(W)-bM
    \end{pmatrix},
    \\
    F_\epsilon^{\mathrm{mom}}(W,M)
    &=
    \begin{pmatrix}
        -h_\epsilon(M)\\
        a\nabla f(W)-bM
    \end{pmatrix}.
\end{align*}
On $\mathcal{U}_{\mathrm{mom}}$, the ideal vector field is
Lipschitz with some constant $L_{\mathrm{mom}}$, and the two vector
fields satisfy
\[
    \|F_\epsilon^{\mathrm{mom}}(X)
    -F_{\mathrm P}^{\mathrm{mom}}(X)\|_\times
    =
    \|h_\epsilon(M)-\operatorname{Polar}(M)\|_F
    \leq
    c_{\underline{\sigma}}\epsilon.
\]
Applying the same Gronwall and no-exit argument to
\(e_{\mathrm{mom}}(t)=\|(W_\epsilon(t),M_\epsilon(t))
-(W_{\mathrm P}(t),M_{\mathrm P}(t))\|_\times\) gives, with
\(C_T^{\mathrm{mom}}=c_{\underline{\sigma}}\psi_{L_{\mathrm{mom}}}(T)\),
a constant \(\epsilon_T^{\mathrm{mom}}>0\) such that
\[
    \sup_{0\leq t\leq T}
    \left(
        \|W_\epsilon(t)-W_{\mathrm P}(t)\|_F
        +
        \|M_\epsilon(t)-M_{\mathrm P}(t)\|_F
    \right)
    \leq
    C_T^{\mathrm{mom}}\epsilon
\]
whenever \(0<\epsilon\leq\epsilon_T^{\mathrm{mom}}\). This proves
\eqref{eq:momentum-trajectory-approximation} and completes the
proof.

\section{Proof of the Local Descent-Rate Comparison}
\label{app:local_advantage}

\begin{proof}
For a direction $D$, the quadratic model along $-\alpha D$ is
\[
q_D(\alpha)
=
f(W)-\alpha\langle G,D\rangle
+
\frac{\alpha^2}{2}
\langle D,\mathcal H_W[D]\rangle.
\]
If $\langle G,D\rangle>0$ and
$\langle D,\mathcal H_W[D]\rangle>0$, differentiation gives
\[
\alpha_D^*
=
\frac{\langle G,D\rangle}
{\langle D,\mathcal H_W[D]\rangle}.
\]
The corresponding first-order descent rate is
\[
R_W(D)
=
\alpha_D^*\langle G,D\rangle
=
\frac{\langle G,D\rangle^2}
{\langle D,\mathcal H_W[D]\rangle}.
\]
Taking $D=G$ and $D=h_\epsilon(G)$ gives
\[
R_F(W)
=
\frac{\|G\|_F^4}
{\langle G,\mathcal H_W[G]\rangle}
\]
and
\[
R_\epsilon(W)
=
\frac{\langle G,h_\epsilon(G)\rangle^2}
{\left\langle
h_\epsilon(G),\mathcal H_W[h_\epsilon(G)]
\right\rangle}.
\]
Their ratio is exactly $\Gamma_\epsilon(W)$, so
$R_\epsilon(W)>R_F(W)$ if and only if $\Gamma_\epsilon(W)>1$.

For completeness, let $G=U\Sigma V^\top$. Then
\[
h_\epsilon(G)
=
U\operatorname{diag}
\left(
\frac{\sigma_i}{\sqrt{\sigma_i^2+\epsilon}}
\right)V^\top
\to
\operatorname{Polar}(G)
\]
as $\epsilon\downarrow0$, where $\operatorname{Polar}(G)$ is the
canonical polar factor on
the nonzero singular subspace. Thus
\[
\langle G,h_\epsilon(G)\rangle\to\|G\|_*
\]
and, whenever the limiting curvature is positive,
\[
R_\epsilon(W)
\to
\frac{\|G\|_*^2}
{\langle \operatorname{Polar}(G),
\mathcal H_W[\operatorname{Polar}(G)]\rangle}.
\]
\end{proof}

\subsection{Simplified Squared-Loss Model Specialization}

For
\[
f(W)=\frac12\|WA-Y\|_F^2,
\]
the gradient and Hessian action are
\[
\nabla f(W)=(WA-Y)A^\top,
\qquad
\mathcal H_W[D]=DAA^\top.
\]
Consequently,
\[
\langle D,\mathcal H_W[D]\rangle=\|DA\|_F^2,
\]
and the general local rate becomes
\[
R_W(D)
=
\frac{\langle G,D\rangle^2}{\|DA\|_F^2}.
\]
Taking $D=G$ and $D=h_\epsilon(G)$ gives
\[
R_F(W)=\frac{\|G\|_F^4}{\|GA\|_F^2},
\qquad
R_\epsilon(W)
=
\frac{\langle G,h_\epsilon(G)\rangle^2}
{\|h_\epsilon(G)A\|_F^2},
\]
and hence
\[
\Gamma_\epsilon(W)
=
\frac{
\langle G,h_\epsilon(G)\rangle^2\|GA\|_F^2
}{
\|G\|_F^4\|h_\epsilon(G)A\|_F^2
}.
\]
This expression isolates how the feature-induced Hessian action modifies the curvature
cost of the two directions.

\section{Proofs of Convergence Rate Results}
\label{app:direct_trajectory_certificates}

\subsection{Integral Trajectory Bounds}
\label{app:direct_trajectory_bounds}

\begin{proof}
The dissipation estimate established in
Appendix~\ref{app:direct_convergence} gives
\[
\frac{d}{dt}f(W(t))
\le
-c_\epsilon\|\nabla f(W(t))\|_F^2.
\]
Integration over $[0,T]$ yields
\[
\int_0^T\|\nabla f(W(t))\|_F^2\,dt
\le
\frac{f(W_0)-f_{\inf}}{c_\epsilon}.
\]

For any matrix $G$ with singular values $\sigma_i$,
\[
\begin{aligned}
\|h_\epsilon(G)\|_F^2
&=
\sum_i\frac{\sigma_i^2}{\sigma_i^2+\epsilon}\\
&\le
\frac1{\sqrt{\epsilon}}
\sum_i\frac{\sigma_i^2}{\sqrt{\sigma_i^2+\epsilon}}\\
&=
\frac1{\sqrt{\epsilon}}
\langle G,h_\epsilon(G)\rangle.
\end{aligned}
\]
Since $\dot W=-h_\epsilon(\nabla f(W))$, integration of the exact energy
identity gives
\[
\int_0^T\|\dot W(t)\|_F^2\,dt
\le
\frac{f(W_0)-f(W(T))}{\sqrt{\epsilon}}
\le
\frac{f(W_0)-f_{\inf}}{\sqrt{\epsilon}}.
\]
Letting $T\to\infty$ proves the two infinite-horizon statements.
\end{proof}

\subsection{Nonconvex Stationarity}
\label{app:direct_nonconvex}

\begin{proof}
By Lemma~\ref{thm:direct_flow_trajectory_bounds},
\[
\int_0^T\|\nabla f(W(t))\|_F^2\,dt
\le
\frac{f(W_0)-f_{\inf}}{c_\epsilon}.
\]
The minimum is bounded by the time average, so
\[
\min_{0\le t\le T}\|\nabla f(W(t))\|_F^2
\le
\frac{f(W_0)-f_{\inf}}{c_\epsilon T}.
\]
The convergence $\nabla f(W(t))\to0$ follows from
Theorem~\ref{thm:direct_flow_convergence}.
\end{proof}

\subsection{Convex Objective Gap}
\label{app:direct_convex}

\begin{proof}
Let
\[
E(t)=f(W(t))-f_*.
\]
Convexity gives, for any $W^*\in\mathcal W^*$,
\[
E(t)
\le
\left\langle
\nabla f(W(t)),W(t)-W^*
\right\rangle
\le
R_0\|\nabla f(W(t))\|_F.
\]
Together with the dissipation estimate,
\[
\dot E(t)
\le
-c_\epsilon\|\nabla f(W(t))\|_F^2
\le
-\frac{c_\epsilon}{R_0^2}E(t)^2.
\]
If $E(0)=0$, the result is immediate. Otherwise,
\[
\frac{d}{dt}\frac1{E(t)}
\ge
\frac{c_\epsilon}{R_0^2}.
\]
Integration gives
\[
E(t)
\le
\frac{R_0^2E(0)}
{R_0^2+c_\epsilon tE(0)},
\]
which is the stated estimate.
\end{proof}

\subsection{PL Exponential Estimate}
\label{app:direct_pl}

\begin{proof}
With $E(t)=f(W(t))-f_*$, the PL inequality and the dissipation estimate give
\[
\dot E(t)
\le
-c_\epsilon\|\nabla f(W(t))\|_F^2
\le
-2\mu c_\epsilon E(t).
\]
Gronwall's inequality yields
\[
E(t)
\le
e^{-2\mu c_\epsilon t}E(0).
\]
\end{proof}

\begin{IEEEbiography}[{\includegraphics[width=1in,height=1.25in,clip,keepaspectratio]{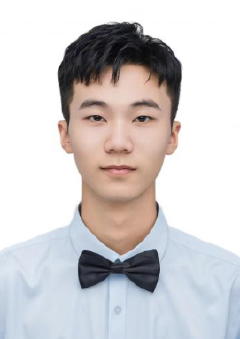}}]{Jinlin Liu}
is currently pursuing a bachelor's degree in mathematics at Zhejiang
University, Hangzhou, China. His research interests include optimization,
dynamical systems, matrix optimization, and deep learning theory, with a
particular focus on continuous-time analysis of optimization
methods.
\end{IEEEbiography}

\begin{IEEEbiography}[{\includegraphics[width=1in,height=1.25in,clip,keepaspectratio]{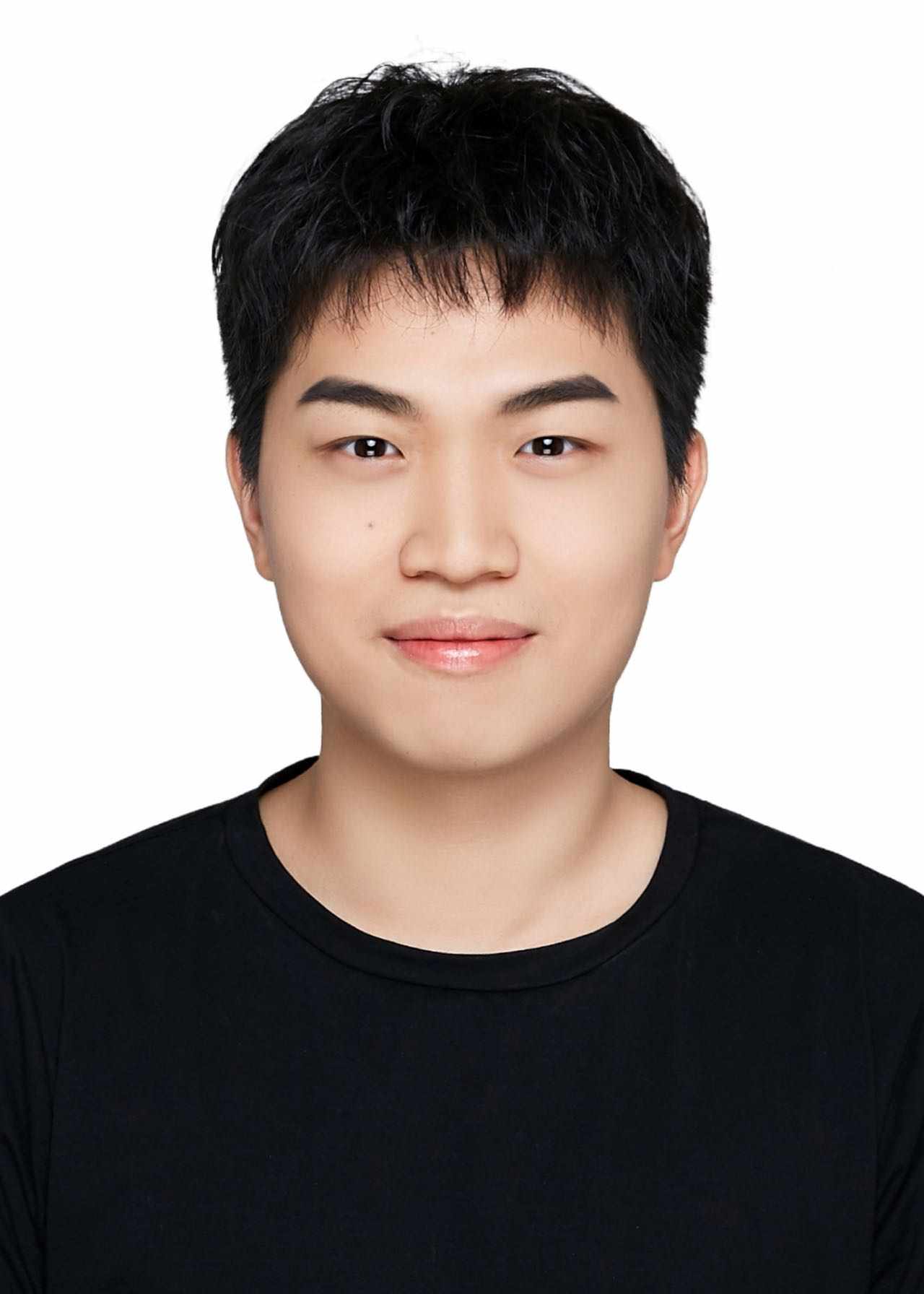}}]{Song Chen}
received the Ph.D. degree in operational research and cybernetics from Zhejiang University, Hangzhou, China, in 2025.

He is currently a Research Fellow with the Department of Mathematics, National University of Singapore (NUS), Singapore. His research interests lie at the intersection of control theory and artificial intelligence, with a particular focus on control-oriented learning methods and embodied AI. His broader expertise includes convex optimization, nonlinear control, and machine learning theory with applications in robotics.
\end{IEEEbiography}

\begin{IEEEbiography}[{\includegraphics[width=1in,height=1.25in,clip,keepaspectratio]{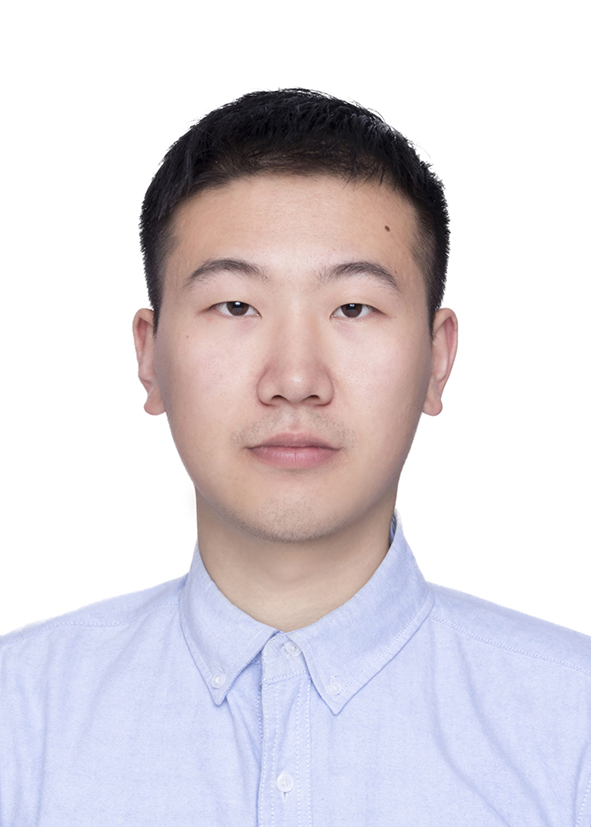}}]{Jiaxu Liu}
received a bachelor's degree in mathematics from Renmin University of China, Beijing, China, in 2021. He received the Ph.D. degree in operational research and cybernetics from Zhejiang University, Hangzhou, China, in 2026. He is currently an associate professor in the School of Science at Huzhou Normal University.

His research interests cover distributed optimization, convex optimization, robust control, federated learning theory, and their practical applications in robotic systems.
\end{IEEEbiography}

\begin{IEEEbiography}[{\includegraphics[width=1in,height=1.25in,clip,keepaspectratio]{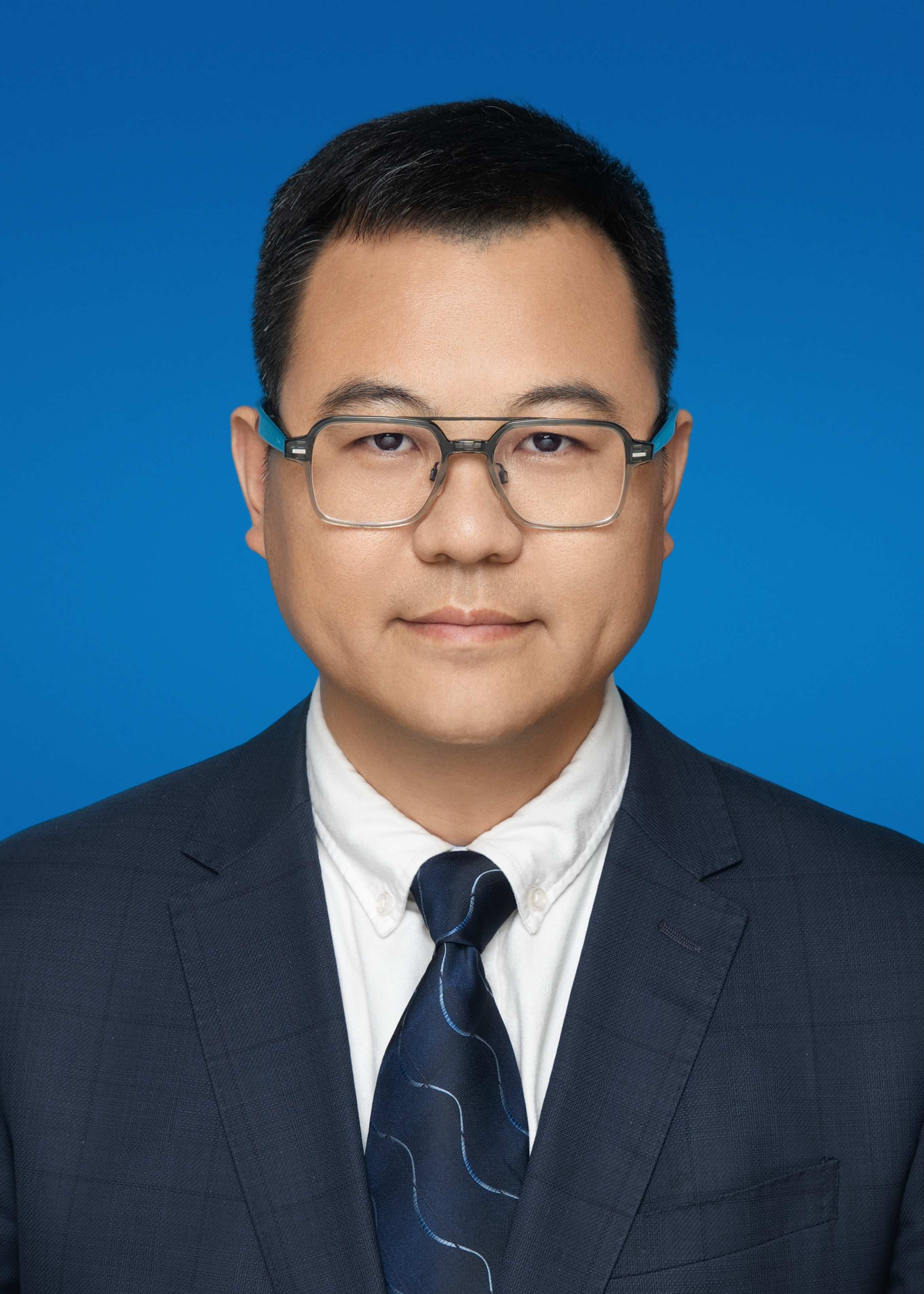}}]{Chao Xu}
(Senior Member, IEEE) received the Ph.D. degree in mechanical engineering from Lehigh University, Bethlehem, PA, USA, in 2010.

He is currently a Professor of Controls and Autonomous Systems with the College of Control Science and Engineering, Zhejiang University (ZJU). He serves as the inaugural Dean of the ZJU Huzhou Institute and as Managing Editor of two international journals: \textit{IET Cyber-Systems and Robotics} (IET-CSR) and the \textit{Journal of Industrial and Management Optimization} (JIMO). His research interests broadly encompass cybernetic physics and autonomous mobility, with a focus on modeling and control of aerial robots, machine learning for dynamical systems and control, and visual sensing and machine learning for complex fluids.
\end{IEEEbiography}

\end{document}